\documentclass[12pt,a4paper]{amsart}

\usepackage{amssymb,amsfonts}
\usepackage[margin=2.9cm]{geometry}
\bibliographystyle{plain}

\usepackage{subcaption}
\captionsetup[subfigure]{labelfont=rm}

\usepackage{graphics}
\usepackage{times}
\usepackage{amsmath}
\usepackage{graphicx} 
\usepackage{caption}

\usepackage{multicol}
\usepackage{mathtools}
\makeatletter

\newcommand*{\transp}[2][-3mu]{\ensuremath{\mskip1mu\prescript{\smash{\mathrm t\mkern#1}}{}{\mathstrut#2}}}

\newcommand\Label[1]{&\refstepcounter{equation}(\theequation)\ltx@label{#1}&}
\newcommand*\diff{\mathop{}\!\mathrm{d}}

\makeatother

\theoremstyle{plain}

\DeclareMathOperator{\WF}{WF}

\DeclareMathOperator{\rank}{rank}
\DeclareMathOperator{\sgn}{sgn}

\theoremstyle{plain}
\newtheorem*{cl*}{Claim}
\newtheorem{cl}{Claim}
\newtheorem*{pp*}{Proposition}	
\newtheorem{pp}{Proposition}
\newtheorem{lm}{Lemma}

\newtheorem{corollary}{Corollary}
\newtheorem{thm}{Theorem}

\theoremstyle{definition}
\newtheorem*{remark*}{Remark}
\newtheorem{example}{Example}
\newtheorem{df}{Definition}

\theoremstyle{definition}
\newtheorem*{eg*}{Example}
\begin{document}
\title[Recovery of singularities for the weighted cone transform]{Recovery of Singularities for the weighted cone transform appearing in the Compton camera imaging}
\author[Y. Zhang]{Yang Zhang}
\address{Purdue University \\ Department of Mathematics}
\email{zhan1891@purdue.edu}
\thanks{Partly supported by NSF Grant DMS-1600327}
\begin{abstract}
We study the weighted cone transform $I_\kappa$ of distributions with compact support in a domain $M $ of $\mathbb{R}^3$, over cone surfaces whose vertexes are located on a smooth surface away from $M$ and opening angles are limited to an open interval of $(0,\pi/2)$.
We show that 
when the weight function has compact support and satisfies certain nonvanishing assumptions,
the normal operator $I^*_\kappa I_\kappa$ is an elliptic $\Psi$DO at the accessible singularities. 
Then the accessible singularities are stably recoverable from local data.
We prove a microlocal stability estimate for $I_\kappa$.
Moreover,  we show the same analysis can be applied to the restricted cone transform.

\end{abstract}
\maketitle 
\section{Introduction}
Let $c(u,\beta,\phi)$ be a circular cone in $\mathbb{R}^3$ with vertex $u$, central axis $\beta$, and opening angle $\phi$, as shown in Figure \ref{conesfig}.
We study the weighted cone transform 
\[
I_\kappa f (u,\beta,\phi) =  \int_{c(u,\beta,\phi)} \kappa  f \diff S, \quad u \in \mathcal{S}, \beta \in S^2, \phi \in (\epsilon, \pi/2 - \epsilon)
\]
of distributions supported in a domain $M$ in $\mathbb{R}^3$ over cones of which the vertexes are restricted to a smooth surface $\mathcal{S}$, where $\kappa$ is a smooth weight and $S$ is the Euclidean measure on the cone. 
The goal of this work is to study the microlocal invertibility of this transform.

The cone transform arises in Compton camera imaging dating back to \cite{W.Todd1974,Everett1977,Singh1983}. 
A Compton camera is composed of two detectors: a scatter and an absorber. Both detectors are position and energy sensitive.
When incoming gamma photons hit the camera, they have Compton scattering at various angles in the first detector and are completely absorbed in the second one. Photons can be traced back to the surface of cones. 

\begin{figure}[h]
    \centering
    \includegraphics[height=0.25\textwidth]{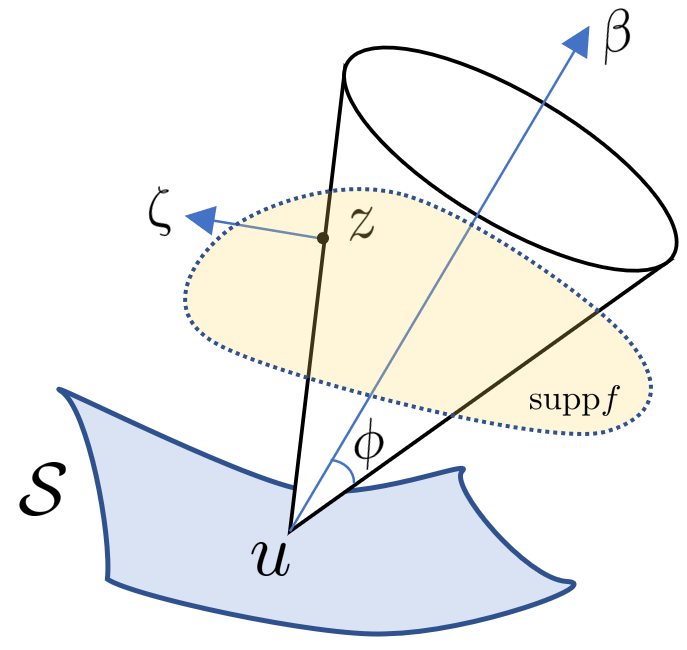}
    \caption{The cone $c(u,\beta,\phi)$ with $(z, \zeta)$ conormal to it }
\end{figure}\label{conesfig}

From the setting of Compton camera, there are three important points about the transform to note here. 
Firstly, it is natural and unavoidable to have a weight function $\kappa$.  
Since the integral over the cone is a superposition of the line integrals, even without attenuation, there should be weight $a/|z-u|$ with $a$ depending on the cone. 
Secondly, the probability of scattering angles (i.e. the opening angle $\phi$) governed by the Klein-Nishina distribution excludes angles that are too close to $0$ or $\pi/2$. 
For example, in \cite{Singh1983} the scattering angles range from $5^\circ$ to $75^\circ$ at $511$ keV incident energy and one obtains $5^\circ$ angular resolution by requiring the energy resolution to be 3.8-18.9 keV. 
On the other hand, when the scattering angle equals zero, the cone transform reduces to the weighted X-ray transform over a set of rays and when the scattering angle equals $\pi/2$, it reduces to the weighted Radon transform. In both cases the inversion are easier.
The limits of scattering angles can be modeled by choosing $\kappa$ supported in such intervals w.r.t. $\phi$. 
Thirdly, the detectors are located outside the region of interest $M$ so it is reasonable to require the vertexes are restricted to a smooth surface $\mathcal{S}$ that does not intersect $M$.

A lot of work has been done on the inversion of the cone transform and some of them are  \cite{Basko1998,Parra2000,Tomitani2002a,Smith2005,Maxim2009,Gouia-Zarrad2014a,Terzioglu2015,Kuchment2016,Jung2015,Schiefeneder2016,Moon2016,Terzioglu2017,Moon2017a,Terzioglu2018,Terzioglu2019}. 
For a more complete and detailed list of previous works, see \cite{Terzioglu2018}. 
Some of the inversion formula are for special geometries, or consider the opening angle $\phi \in (0,\pi)$, or constant weight, or use transform over cones whose vertexes can be everywhere.
Most recently, \cite{Terzioglu2019} considers a polynomial weight function and computes the normal operator of the cone transform over all cones whose vertexes are in $\mathbb{R}^n$ to show it is a $\Psi$DO. 
It gives certain integral formula for the amplitude of the normal operator but it is not so clear how to resolve the singularities in the denominator of the integrand to obtain a smooth amplitude.

For the cone transform that we define, it is harder to analyze the Schwartz kernel of the normal operator and the normal operator may fail to be a $\Psi$DO in some microlocal region, if Tuy's condition is not satisfied. 
Instead, we use the clean intersection calculus of FIOs in \cite{Duistermaat1994,Hoermander2009} to show under which conditions the microlocalized normal operator is a $\Psi$DO of order $-2$ and it is elliptic with certain nonvanishing assumptions of the weight, see Proposition \ref{I8I} and \ref{mI8I}.
This approach was proposed by Guillemin in \cite{Guillemin1985} for the generalized Radon transform . 
One difficulty to apply H{\"o}rmander's clean intersection composition to our transform is that the composition fails to be proper. 
For this purpose, we modify the clean composition theorem slightly by assuming the microsupport of composed FIO is conically compact instead of the properness condition following Andr{\'a}s Vasy's suggestion.
Our first result describes the singularities of $f$ that we can recover from the data $I_\kappa f$ in a stable way. 
We note that this is the intrinsic property of the transform itself no matter what inversion algorithm is used. More specifically, let 
\[
\mathcal{C}(z_0,\zeta^0) = \{(u,\beta,\phi)| (z_0 - u) \cdot \beta  - |z_0 - u|\cos \phi = 0, \ \zeta^0 \cdot (z_0 - u) = 0  \}
\] be the set of all cones that are conormal to fixed a $(z_0, \zeta^0) \in T^*M$, of which the dimension equals $2$. 
We can only expect to recover singularities conormal to the cones of which the weight is nonvanishing there.
The definition of accessible singularities and Tuy's condition can be found in Section 2.  
\begin{thm}
    Suppose $(z_0, \zeta^0)$ is accessible. If $\kappa(u_0, \beta_0, \phi_0, z_0) \neq 0$ for some $(u_0, \beta_0, \phi_0) \in  \mathcal{C}(z_0,\zeta^0)$, then $(z_0, \zeta^0)$ is recoverable. In other words, if $I_\kappa f$ is smooth near $(u_0, \beta_0, \phi_0)$, then $f$ is smooth near $(z_0, \zeta^0)$. 
\end{thm}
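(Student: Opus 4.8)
The plan is to reduce the statement to microlocal elliptic regularity for a suitably microlocalized normal operator. Write $\mathcal{Y}=\mathcal{S}\times S^2\times(\epsilon,\pi/2-\epsilon)$ for the parameter space of cones. Since $(u_0,\beta_0,\phi_0)\in\mathcal{C}(z_0,\zeta^0)$, the cone $c(u_0,\beta_0,\phi_0)$ is conormal to $(z_0,\zeta^0)$, and differentiating the incidence relation in the cone parameters produces a covector $\eta_0\in T^*_{(u_0,\beta_0,\phi_0)}\mathcal{Y}$ so that $\big((u_0,\beta_0,\phi_0,\eta_0),(z_0,\zeta^0)\big)$ lies on the canonical relation of the FIO $I_\kappa$. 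First I would fix a cutoff $\psi\in C_c^\infty(U)$ equal to $1$ near $(u_0,\beta_0,\phi_0)$, where $U$ is a neighborhood on which $I_\kappa f$ is assumed smooth, together with a $\Psi$DO $B_0$ on $\mathcal{Y}$ that is microlocally elliptic at $(u_0,\beta_0,\phi_0,\eta_0)$, and set $B=\psi B_0\psi$.

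Next I would introduce the microlocalized normal operator $A=I^*_\kappa B^*B\,I_\kappa$. By Proposition \ref{I8I} this is a $\Psi$DO of order $-2$: the microlocal cutoff $B$ renders the relevant microsupport conically compact and thereby repairs the failure of properness in the composition of $I^*_\kappa$ and $I_\kappa$. By Proposition \ref{mI8I} its principal symbol at $(z_0,\zeta^0)$ is, up to a positive factor, an integral over the portion of $\mathcal{C}(z_0,\zeta^0)$ selected by $B$ of a nonnegative density built from $|\kappa|^2$ and $|{\sigma(B)}|^2$. Because $\kappa$ is continuous and $\kappa(u_0,\beta_0,\phi_0,z_0)\neq 0$, it stays nonvanishing on a neighborhood of $(u_0,\beta_0,\phi_0)$ inside the two-dimensional family $\mathcal{C}(z_0,\zeta^0)$, so this integral is strictly positive and $A$ is elliptic at $(z_0,\zeta^0)$.

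Then I would invoke the hypothesis. Since $I_\kappa f$ is smooth on $U$, the function $\psi I_\kappa f$ is smooth and compactly supported, so $B\,I_\kappa f=\psi B_0(\psi I_\kappa f)$ is smooth, hence so is $B^*B\,I_\kappa f$, and therefore $Af=I^*_\kappa(B^*B\,I_\kappa f)$ is smooth; in particular $(z_0,\zeta^0)\notin\WF(Af)$. Microlocal elliptic regularity for the $\Psi$DO $A$ then gives $\WF(f)=\WF(Af)$ in a conic neighborhood of $(z_0,\zeta^0)$, whence $(z_0,\zeta^0)\notin\WF(f)$, which is precisely the assertion that $f$ is smooth near $(z_0,\zeta^0)$.

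The main obstacle is not the regularity argument, which is routine once the pieces are assembled, but securing that $A$ is genuinely an elliptic $\Psi$DO; this rests on the clean intersection composition of $I^*_\kappa$ with $I_\kappa$ and on the Bolker-type injectivity guaranteed by accessibility and Tuy's condition, both established in Propositions \ref{I8I} and \ref{mI8I}. The one delicate point specific to this theorem is that the principal symbol of $A$ is an average over the family $\mathcal{C}(z_0,\zeta^0)$ rather than a pointwise multiple of $\kappa$, so ellipticity must be deduced from nonvanishing of $\kappa$ on a set of positive measure; this is exactly what continuity supplies from the single nonvanishing value at $(u_0,\beta_0,\phi_0,z_0)$.
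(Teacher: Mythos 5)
Your argument follows the paper's own route: the paper deduces the theorem directly from Proposition \ref{mI8I} (ellipticity of the microlocalized normal operator, obtained via the clean intersection calculus, the Bolker-type injective immersion from accessibility, and the composed principal symbol formula with the nonvanishing of $\kappa$) combined with standard microlocal elliptic regularity, which is exactly your plan. The only cosmetic difference is that the paper places cutoff $\Psi$DOs on both sides ($P$ on $M$ and $Q$ on $\mathcal{M}$) while you use only a data-side cutoff $B$, but the substance of the argument is the same.
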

This theorem shows that accessible singularities are recoverable with some nonvanishing assumptions on $\kappa$.
The recovery comes from the fact that with the assumptions the normal operator in certain microlocal region is an elliptic $\Psi$DO and therefore it is microlocally invertible.
We also shows the mapping properties of $I_\kappa, I_\kappa^*$ and a microlocal stability estimate when all singularities are accessible. 
\begin{thm}
    Suppose $\mathcal{S}$ satisfy Tuy's condition w.r.t. $M$. 
    Then for any $s\in \mathbb{R}$, we have $I_\kappa:  H^s_{loc}(M) \rightarrow H^{s+1}_{loc}(\mathcal{M}), \quad I_\kappa^*: H^s_{loc}(\mathcal{M}) \rightarrow H^{s+1}_{loc}({M})$ are continuous.
    For $f \in H^s(M), l \in \mathbb{R}$, there exists constant $C_1, C_2, C_{s,l}$ such that
    \[
    C_1 \| f\|_{H^s(M)} - C_{s,l}\| f\|_{H^l(M)} \leq \|I_\kappa f\|_{H^{s+1}(\mathcal{M})} \leq C_2 \|f\|_{H^s(M)}.
    \]
\end{thm}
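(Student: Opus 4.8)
The plan is to deduce everything from the single structural fact proved in Propositions \ref{I8I} and \ref{mI8I}: the normal operator $N := I_\kappa^* I_\kappa$ is an elliptic $\Psi$DO of order $-2$ at accessible singularities. The role of Tuy's condition is to make this global. When $\mathcal{S}$ satisfies Tuy's condition with respect to $M$, every $(z,\zeta)\in T^*M\setminus 0$ is an accessible singularity, so — together with the standing nonvanishing hypotheses on $\kappa$ — the two Propositions promote to the statement that $N$ is an elliptic $\Psi$DO of order $-2$ on all of $T^*M\setminus 0$. Since $N$ is then a $\Psi$DO, the canonical relation $C$ of the Fourier integral operator $I_\kappa$ composes cleanly with its transpose $C^t$ to the diagonal, and I will exploit this repeatedly.

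For the mapping statements and the upper bound I would pass to a weighted normal operator. Let $\Lambda_{\mathcal{M}}=(1-\Delta_{\mathcal{M}})^{1/2}$ and set
\[
P_s := I_\kappa^* \, \Lambda_{\mathcal{M}}^{2(s+1)} \, I_\kappa .
\]
Inserting the elliptic $\Psi$DO $\Lambda_{\mathcal{M}}^{2(s+1)}$ between $I_\kappa^*$ and $I_\kappa$ leaves the canonical relation and the clean-intersection excess of the composition unchanged, so $P_s$ is again a $\Psi$DO, of order $2(s+1)$ higher than $N$, i.e.\ of order $2s$, and it inherits ellipticity from $N$. It is self-adjoint and nonnegative, since
\[
\langle P_s f, f\rangle = \langle \Lambda_{\mathcal{M}}^{2(s+1)} I_\kappa f,\, I_\kappa f\rangle = \|I_\kappa f\|_{H^{s+1}(\mathcal{M})}^2 .
\]
Because $P_s$ maps $H^s_{loc}(M)$ to $H^{-s}_{loc}(M)$ continuously, the Cauchy--Schwarz pairing gives $\|I_\kappa f\|^2_{H^{s+1}(\mathcal{M})}=\langle P_s f,f\rangle \le \|P_s f\|_{H^{-s}}\|f\|_{H^s}\le C_2^2\|f\|^2_{H^s(M)}$, which is the upper bound and, equivalently, the continuity of $I_\kappa:H^s_{loc}(M)\to H^{s+1}_{loc}(\mathcal{M})$. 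Continuity of $I_\kappa^*:H^s_{loc}(\mathcal{M})\to H^{s+1}_{loc}(M)$ then follows by duality, applying the bound for $I_\kappa$ with $s$ replaced by $-s-1$.

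For the lower bound I would use the ellipticity of $P_s$. As a self-adjoint, nonnegative, elliptic $\Psi$DO of order $2s$, its principal symbol is real, nonnegative and nonvanishing, hence bounded below by $c_0|\zeta|^{2s}$ on $T^*M\setminus 0$. Applying G\aa rding's inequality to $P_s-\tfrac{c_0}{2}\Lambda_M^{2s}$, together with interpolation to reach an arbitrary lower-order norm, then yields constants $c>0$ and $C_{s,l}$ with
\[
\langle P_s f, f\rangle \ \ge\ c\,\|f\|^2_{H^s(M)} - C_{s,l}\,\|f\|^2_{H^l(M)}, \qquad l\in\mathbb{R},
\]
where the error term absorbs the lower-order part of a parametrix of $P_s$. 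Combining this with the identity $\langle P_s f,f\rangle=\|I_\kappa f\|^2_{H^{s+1}(\mathcal{M})}$ and taking square roots (using $\sqrt{a+b}\le\sqrt a+\sqrt b$) gives
\[
\sqrt{c}\,\|f\|_{H^s(M)} - \sqrt{C_{s,l}}\,\|f\|_{H^l(M)} \ \le\ \|I_\kappa f\|_{H^{s+1}(\mathcal{M})},
\]
which is the asserted estimate with $C_1=\sqrt c$.

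The main obstacle is concentrated in the first paragraph: converting the pointwise-in-$(z,\zeta)$ ellipticity of $N$ on the accessible set into a genuinely globally elliptic $\Psi$DO to which G\aa rding applies. Two points need care. First, one must verify that Tuy's condition really forces every covector in $T^*M\setminus 0$ to be accessible, and that the nonvanishing of $\kappa$ persists along the corresponding two-parameter cone families $\mathcal{C}(z,\zeta)$, so that the principal symbol of $N$ — itself a fiber integral over $\mathcal{C}(z,\zeta)$ — has no zeros on the cosphere bundle. Second, as noted in the introduction the composition $I_\kappa^* I_\kappa$ fails to be proper, so the clean-intersection composition theorem — and with it the identification of $N$ and $P_s$ as honest $\Psi$DOs — must be applied in the conically-compact-microsupport form; keeping the cutoffs used there compatible with the local Sobolev norms appearing in the estimate, and checking that the resulting lower-order errors are absorbed into the $C_{s,l}\|f\|_{H^l}$ term, is the technical heart of the argument.
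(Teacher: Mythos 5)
Your proposal is correct, but it takes a genuinely different route from the paper's main proof. The paper gets the continuity of $I_\kappa$ and the upper bound from H\"ormander's $L^2$-continuity theorem for FIOs whose canonical relation has constant-rank projections (Proposition \ref{L2cts}), after verifying in a separate claim that $\rank(\diff\pi_M)=\rank(\diff\pi_{\mathcal M})$ equals $2\dim M$ for $C_I$, so that with $m=-3/2$ one may take $t=1$; the lower bound is then obtained by chaining the elliptic parametrix estimate for $N=I_\kappa^*I_\kappa$ (a $\Psi$DO of order $-2$, elliptic by Proposition \ref{I8I}) with the already-established bound $\|I_\kappa^* g\|_{H^{s+2}}\le C\|g\|_{H^{s+1}}$. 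You instead extract both bounds from the single weighted normal operator $P_s=I_\kappa^*\Lambda_{\mathcal M}^{2(s+1)}I_\kappa$ and the identity $\langle P_sf,f\rangle=\|I_\kappa f\|^2_{H^{s+1}(\mathcal M)}$: Cauchy--Schwarz for the upper bound, duality for $I_\kappa^*$, and G{\aa}rding plus interpolation for the lower bound. This is essentially an all-$s$, streamlined version of what the paper presents as its ``another proof of the estimate for $s=-1$,'' where your $\Lambda_{\mathcal M}^{2(s+1)}$ is replaced by sums of derivatives $\partial^\alpha_{(u,\beta,\phi)}$ with $|\alpha|\le 2s+2$ and the $\Psi$DO property of $I_\kappa^*\partial^\alpha I_\kappa$ is checked by integration by parts. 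What your route buys is that it bypasses the constant-rank continuity theorem and the accompanying Jacobian computation, using only the clean composition calculus of Proposition \ref{CIC}; what it costs is that you must confirm that inserting $\Lambda_{\mathcal M}^{2(s+1)}$ preserves hypothesis (A2) (conically compact microsupport, via Lemma \ref{microsupportcutoff}) and that the principal symbol of $P_s$ is uniformly elliptic over the cosphere bundle of $\supp f$, which --- as you correctly flag --- needs not only Tuy's condition but the standing nonvanishing hypothesis on $\kappa$ along each fiber $\mathcal{C}(z,\zeta)$, an assumption the paper's proof also uses implicitly through Proposition \ref{I8I}. Both arguments are sound.
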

For surfaces satisfying Tuy's condition, see Example \ref{sphereeg}. We should mention that assuming $\mathcal{S}, \kappa$ are analytic, one might be able to show the injectivity of $I_\kappa$ by applying the analytic microlocal analysis, see \cite{Frigyik2007}.
Once we have the transform $I_\kappa$ is injective on some closed subspace of $H^s(M)$, 
we can show the microlocal stability actually implies the stability estimate, i.e. we do not have the term $C_{s,l}\| f\|_{H^l(M)}$ in above inequality.

This work is structured as follows. 
Section \ref{preliminaries} introduces the notations and definitions. 
Section \ref{sectionFIO} proves that $I_\kappa$ is an FIO and show in which cases the projection $\pi_\mathcal{M}$ is an injective immersion. 
Section \ref{sectionCIC} presents a slightly modified version of the clean composition theorem.
Section \ref{sectionnormal} shows with certain positive assumptions on the weight $\kappa$, 
the normal operator is an elliptic $\Psi$DO at accessible singularities, which implies Theorem 1.
Section \ref{sectionproof2} contains the proof of Theorem 2.
Section \ref{sectionrestricted} studies the weighted cone transform over cones whose vertexes are restricted to a smooth curve and the opening angle is fixed.

\subsection*{Acknowledgments}
The author would like to thank Plamen Stefanov for suggesting this problem and for numerous helpful discussion with him throughout this project, and to thank Andr{\'a}s Vasy for helpful suggestions on the clean intersection calculus part.

\section{Preliminaries}\label{preliminaries}
Here we introduce some notations to be used in the following sections.
As mentioned before, let $M$ be an open domain in $\mathbb{R}^3$ and we always assume $f\in \mathcal{E}'(M)$.
Suppose $\mathcal{S}$ is a smooth regular surface without boundary where the vertexes of cones are located. 
Let $0 \leq \epsilon<\pi/4$. 
Let $\mathcal{M} = \mathcal{S} \times S^2 \times (\epsilon,{\pi}/{2}-\epsilon)$ be the family of cones that we consider. 
Notice $\mathcal{M}$ is a smooth manifold without boundary.

Since both $u, \beta$ are in smooth surface, we consider them in local coordinates. 
Suppose $\mathcal{S}$ has the regular parameterization locally given by $u = u(v^1,v^2)$. 
Let $J_1 = (\frac{\partial u}{\partial v^1}, \frac{\partial u}{\partial v^2}) \equiv (r_1, r_2)$ be the Jacobian matrix.
Notice $r_1, r_2$ form a basis for the Tangent space at $u$ of $\mathcal{S}$.
Suppose $\beta \in S^2$ is locally parameterized by $\beta = (\sin\theta \cos \psi, \sin \theta \sin \psi, \cos \theta)$ for $\theta \in (0, \pi)$. 
Let $J_2 = (\frac{\partial \beta}{\partial \theta}, \frac{\partial \beta}{\partial \psi}) \equiv (\beta_1, \sin \theta \beta_2)$, where $\beta_1 = (\cos\theta \cos \psi, \cos \theta \sin \psi, -\sin \theta)$ and $\beta_2 = (-\sin \psi, \cos \psi,0)$.
Notice $\beta, \beta_1, \beta_2$ form an orthonormal basis in $\mathbb{R}^3$.

\begin{df}
We say $(z,\zeta) \in T^*M$ is \textit{accessible} by $\mathcal{S}$ if the hyperplane conormal to $(z,\zeta)$ has a non-tangential intersection with $\mathcal{S}$. 
\end{df}
If we donate the set of all $(z,\zeta)$ in $T^*M$ that are accessible by $D$, then $D$ is an open set. Observe that $(z,\zeta)$ is accessible implies that exists a cone that it is conormal to, and for any qualified cone $(u, \beta, \phi)$, the covector $\zeta$ cannot be perpendicular to the tangent space at $u$ in $\mathcal{S}$.
\begin{df}
If every $(z,\zeta) \in T^*M$ is accessible, then we say the surface $\mathcal{S}$ satisfying the \textit{Tuy's condition} with respect to $M$.
\end{df}
This coincides with the definition in \cite{Terzioglu2017} that Tuy's condition is satisfied if any hyperplane intersecting $M$ has a non-tangential intersection with $\mathcal{S}$.

\section{$I_\kappa$ as an FIO}\label{sectionFIO}
The weighted cone transform $I_\kappa$ can be written as 
\begin{align*}
I_\kappa f(u,\beta,\phi) =  \int_{\mathbb{R}^3} \kappa(u,\beta,\phi,z) \delta((z-u) \cdot \beta - |z-u| \cos \phi) f(z) 
\diff z, \\
\quad u \in \mathcal{S}, \beta \in S^2, \phi \in (\epsilon, \pi/2 -\epsilon)
\end{align*}
where the distribution $\delta$ has a nonzero factor but we can regard the factor as a part of the weight.
\begin{pp}\label{prop_CI}
    The weighted cone transform $I_\kappa$ is an FIO of order $-{3}/{2}$ associated with the canonical relation
    \begin{align*}
    C_I
    & = \{ (u, \beta, \phi,
    \underbrace{ \transp{J_1} \zeta \vphantom{\lambda |z-u|\sin\phi}}_{\hat{u}},
    \underbrace{\lambda \transp{J_2}  (z-u) \vphantom{\lambda |z-u|\sin\phi}}_{\hat{\beta}},
    \underbrace{\lambda |z-u|\sin\phi  \vphantom{\lambda |z-u|\sin\phi}}_{\hat{\phi}},
    z,
    \zeta 
    ), \ 
    \varphi=0,
    \lambda \neq 0
    \},
    \end{align*}
    where 
    $\varphi(u,\beta,\phi,z) = (z-u) \cdot \beta - |z-u| \cos \phi$
    and $\zeta = -\lambda(\beta - \frac{z-u}{|z-u|}\cos\phi)$; 
    the vertex $u = u(v^1,v^2)$ locally and $J_1 = \frac{\partial u}{\partial(v^1,v^2)}$ is the Jacobian matrix; 
    the unit vector $\beta=\beta(\theta,\psi)$ is locally parameterized in the spherical coordinates 
    and $J_2 = \frac{\partial \beta}{\partial(\theta,\psi)}$ is the Jacobian matrix.
\end{pp}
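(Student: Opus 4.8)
The plan is to realize $I_\kappa$ as an oscillatory integral and then read off the phase, amplitude, canonical relation, and order in the standard way. First I would replace the delta distribution by its Fourier representation $\delta(\varphi)=\frac{1}{2\pi}\int_{\mathbb{R}}e^{i\lambda\varphi}\diff\lambda$, which turns the transform into
\[
I_\kappa f(u,\beta,\phi)=\frac{1}{2\pi}\int_{\mathbb{R}}\int_{\mathbb{R}^3}e^{i\lambda\varphi(u,\beta,\phi,z)}\kappa(u,\beta,\phi,z)f(z)\diff z\diff\lambda .
\]
This presents $I_\kappa$ with a single phase variable $\lambda\in\mathbb{R}$, phase $\Phi=\lambda\varphi$ (homogeneous of degree one in $\lambda$) and amplitude $\kappa$, which is a symbol of order $0$ since it is smooth, independent of $\lambda$, and compactly supported in $z$ and $\phi$.

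Next I would check that $\Phi$ is a nondegenerate phase function. With $N=1$ phase variable this reduces to verifying that $d(\partial_\lambda\Phi)=d\varphi$ is nonvanishing on the critical set $\{\varphi=0\}$. The key geometric input is $\partial_z\varphi=\beta-\frac{z-u}{|z-u|}\cos\phi$: since $\beta$ is a unit vector while $\big|\frac{z-u}{|z-u|}\cos\phi\big|=\cos\phi$, we get $|\partial_z\varphi|\geq 1-\cos\phi>0$, using that $\phi\in(\epsilon,\pi/2-\epsilon)$ forces $\cos\phi<1$ and that $z\neq u$ because $\mathcal{S}\cap M=\emptyset$. Hence $d\varphi\neq 0$, the phase is nondegenerate, and $I_\kappa$ is an FIO whose canonical relation is the conic Lagrangian parametrized by the critical manifold of $\Phi$.

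Then I would compute the canonical relation from $C_I=\{(y,d_y\Phi;\,z,-d_z\Phi):\partial_\lambda\Phi=0\}$ with $y=(u,\beta,\phi)$. In the ambient $\mathbb{R}^3$ one has $\partial_u\varphi=-\partial_z\varphi$ (because $\varphi$ depends on $u,z$ only through $z-u$), $\partial_\beta\varphi=z-u$, and $\partial_\phi\varphi=|z-u|\sin\phi$. Setting $\zeta=-d_z\Phi=-\lambda(\beta-\frac{z-u}{|z-u|}\cos\phi)$ and transporting the $u$- and $\beta$-components back to $\mathcal{S}$ and $S^2$ through the Jacobians $J_1,J_2$ by the chain rule gives $\hat u=\transp{J_1}(\lambda\partial_u\varphi)=\transp{J_1}\zeta$, $\hat\beta=\transp{J_2}(\lambda\partial_\beta\varphi)=\lambda\transp{J_2}(z-u)$, and $\hat\phi=\lambda|z-u|\sin\phi$, with $\lambda\neq 0$ corresponding to the complement of the zero section; this is exactly the stated $C_I$. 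The order then follows from the usual count: an amplitude of order $0$ with one phase variable on the product of $\mathcal{M}$ ($\dim 5$) and $M$ ($\dim 3$) yields FIO order $0-(5+3)/4+1/2=-3/2$, and the same count reproduces $-(n-1)/2$ for the Radon transform in $\mathbb{R}^n$ as a sanity check.

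I expect the main difficulty to be bookkeeping rather than conceptual. The one genuinely geometric step is the nondegeneracy estimate $|\partial_z\varphi|\geq 1-\cos\phi>0$, which is precisely where the angle restriction $\phi<\pi/2$ and the separation $\mathcal{S}\cap M=\emptyset$ enter; everything else reduces to correctly transporting the $\mathbb{R}^3$ gradients $\partial_u\varphi,\partial_\beta\varphi$ to covectors on the curved factors $\mathcal{S}$ and $S^2$ via $\transp{J_1},\transp{J_2}$, and to tracking the identity $\partial_u\varphi=-\partial_z\varphi$ that forces the $\hat u$-component to equal $\transp{J_1}\zeta$.
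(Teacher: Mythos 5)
Your proposal is correct and follows essentially the same route as the paper: represent the delta via $\delta(\varphi)=\frac{1}{2\pi}\int e^{i\lambda\varphi}\diff\lambda$, verify $\diff\varphi\neq 0$ on $\{\varphi=0\}$ (your quantitative bound $|\partial_z\varphi|\geq 1-\cos\phi>0$ is a sharper form of the paper's observation that $\diff_z\varphi=0$ forces $\sin\phi=0$), pull the ambient gradients back through $\transp{J_1},\transp{J_2}$, and count the order as $0+\tfrac12-\tfrac{5+3}{4}=-\tfrac32$. No gaps.
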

\begin{proof}
    We rewrite $I_\kappa$ as the oscillatory integral
    \[
    I_\kappa f (u,\beta,\phi) =  (2\pi)^{-1} \int_\mathbb{R^3} \int_\mathbb{R} e^{i  \lambda \varphi(u,\beta,\phi,z)} \kappa(u,\beta,\phi,z) f(z) \diff \lambda \diff z,
    \]
    where $\varphi(u,\beta,\phi,z)$ is defined as above.
    Notice that $\diff \varphi \neq 0$.
    Its Schwartz kernel is a Lagrangian distribution conormal to the characteristics manifold
    \[
    Z = \{(u,\beta,\phi,z) \in \mathcal{M}\times M, \ 
    \varphi = 0 \}.
    \]
    Then the conormal bundle is
    \begin{align*}
    N^*Z 
    & = \{ (u,\beta,\phi,z,
    \lambda \diff_u \varphi, \lambda \diff_\beta \varphi, \lambda \diff_\phi \varphi, \lambda \diff_z \varphi
    ), \ 
    \varphi = 0\}.
    \end{align*}
    where we abuse the notation $\diff_u$, $\diff_\beta$ to denote the differential w.r.t. the parameterization of $u$, $\beta$ and we have
    \begin{align}
    & \diff_z \varphi =  \beta - \frac{z-u}{|z-u|}\cos\phi, \quad \diff_u \varphi = - \transp{J_1} \diff_z \varphi, \label{varphieq1}\\
    &\diff_\beta \varphi = \transp{J_2} (z-u), \quad
    \diff_\phi \varphi =  |z-u| \sin \phi. \label{varphieq2}
    \end{align}
    Let 
    \[
    \Lambda  = \{ (u,\beta,\phi,z,
    \lambda \diff_u \varphi, \lambda \diff_\beta \varphi, \lambda \diff_\phi \varphi, \lambda \diff_z \varphi
    ), \ 
    \varphi = 0, \lambda \neq 0\}.
    \]
    One can show this is a closed conic Lagrangian submanifold of $T^*(\mathcal{M}\times M) \setminus 0$. 
    
    Additionally, since we always have $M$ is away from $\mathcal{S}$, 
    from (\ref{varphieq1})(\ref{varphieq2}) we get that
    \[
    d_z \varphi =0 \Longleftrightarrow
    d_u \varphi =d_\beta \varphi = d_\phi \varphi=0 \Longleftrightarrow
    \sin \phi = 0,
    \quad \text{with } \varphi =0.
    \]
    Since $\phi \in (\epsilon,\frac{\pi}{2}-\epsilon)$, therefore the Lagrangian satisfies 
    \[
    \Lambda \subset (T^*\mathcal{M} \setminus 0)\times (T^*M \setminus 0).
    \]
    Then the canonical relation $C_I$ is given by the twisted Lagrangian. The order is given by $0 + \frac{1}{2} \times 1 - \frac{1}{4} \times (3 + 5) = -\frac{3}{2}$ by \cite[Definition 3.2.2]{Hoermander2003}.
\end{proof}
Note that we have $\dim T^*\mathcal{M} = 10$, $\dim T^*M = 6$,  and $\dim C_I = 8$. 
Let $\pi_\mathcal{M}$, $\pi_{M}$ be the natural projection of $C_I$ to $T^*\mathcal{M},T^*M$ respectively.
Notice in cone transform, neither of these projections can be local diffeomorphism.
The following proposition describes the mapping properties of them.

\begin{pp}\label{propertyofC}
    Recall $D$ is the set of all accessible covectors $(z,\zeta)$ in $T^*M $. 
    Notice $\pi_{\mathcal{M}} \pi^{-1}_M(D) =\{ (u,\beta,\phi,\hat{u},\hat{\beta},\hat{\phi}) \in C_I(z,\zeta)|(z,\zeta) \in D\}$.
    Let $H_{(z,\zeta)}$ be the hyperplane conormal to $(z,\zeta)$.
    We have the following statements.
    \begin{itemize}
        \item[(a)] For each $(z,\zeta) \in D$, the set $C_I(z,\zeta)$ is a surface that can be parametrized by $(u,\phi) \in U_{(z,\zeta)} \times (\epsilon,{\pi}/{2}-\epsilon)$,
        where $U_{(z,\zeta)} = \mathcal{S} \cap H_{(z,\zeta)}$, see Figure \ref{exampleofu} as examples.
        \item[(b)] For each
        $(u,\beta,\phi,\hat{u},\hat{\beta},\hat{\phi}) 
        \in \pi_{\mathcal{M}} \pi^{-1}_M(D) $, 
        there is one unique solution $(z,\zeta)$ for the equation $C_I(z,\zeta)=(u,\beta,\phi,\hat{u},\hat{\beta},\hat{\phi})$, which is given by (\ref{solution_z}) and below.
        \item[(c)] The projection $\pi_\mathcal{M}$ restricted to $\pi^{-1}_\mathcal{M}(D)$ is an injective immersion. 
        In particular, if $\mathcal{S}$ satisfies Tuy's condition, then $\pi_\mathcal{M}$ itself is an injective immersion. 
    \end{itemize}
\end{pp}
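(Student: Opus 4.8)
The plan is to prove the three parts by explicitly inverting the defining equations of $C_I$ from Proposition \ref{prop_CI}, working in the orthonormal frame $\beta,\beta_1,\beta_2$ and the tangent frames $r_1,r_2$ of $\mathcal S$ and $\beta_1,\sin\theta\,\beta_2$ of $S^2$. Writing $w=(z-u)/|z-u|$, the relations cutting out $C_I$ read $\beta\cdot w=\cos\phi$ (the equation $\varphi=0$), $\zeta=-\lambda(\beta-w\cos\phi)$, and, from (\ref{varphieq1})--(\ref{varphieq2}), $\hat u=\transp{J_1}\zeta$, $\hat\beta=\lambda\transp{J_2}(z-u)$, $\hat\phi=\lambda|z-u|\sin\phi$. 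Part (a) solves these for $(\beta,\lambda,\hat u,\hat\beta,\hat\phi)$ in terms of $(u,\phi)$ with $(z,\zeta)$ held fixed; part (b) solves them for $(z,\zeta)$ in terms of $(u,\beta,\phi,\hat u,\hat\beta,\hat\phi)$; and (c) then follows formally from the smoothness of the inversion in (b).

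For (a) I would first note that $(\beta-w\cos\phi)\cdot(z-u)=|z-u|(\beta\cdot w-\cos\phi)=0$ on $\{\varphi=0\}$, so $\zeta=-\lambda(\beta-w\cos\phi)$ forces $\zeta\cdot(z-u)=0$; hence $u$ is confined to $U_{(z,\zeta)}=\mathcal S\cap H_{(z,\zeta)}$. Since $(z,\zeta)$ is accessible, $H_{(z,\zeta)}$ meets $\mathcal S$ transversally, so $U_{(z,\zeta)}$ is a regular curve by the implicit function theorem. For fixed $(u,\phi)\in U_{(z,\zeta)}\times(\epsilon,\pi/2-\epsilon)$ I solve for $\beta$ and $\lambda$: from $\beta=w\cos\phi-\zeta/\lambda$ and $|\beta|=1$, using $\zeta\cdot w=0$, one gets $\cos^2\phi+|\zeta|^2/\lambda^2=1$, so $\lambda=\pm|\zeta|/\sin\phi$ and $\beta=w\cos\phi\mp(\zeta/|\zeta|)\sin\phi$. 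Thus $\beta,\lambda$, and then $\hat u,\hat\beta,\hat\phi$, are determined by $(u,\phi)$ up to the reflection of the central axis across $w$; on each branch $C_I(z,\zeta)$ is a smooth $2$-surface parametrized by $(u,\phi)$.

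For (b) I would reconstruct $(z,\zeta)$ from a point $(u,\beta,\phi,\hat u,\hat\beta,\hat\phi)$ of the image. Since $z$ lies on the cone, $z-u=t(\cos\phi\,\beta+\sin\phi\,\nu)$ with $t=|z-u|>0$ and $\nu=\cos\alpha\,\beta_1+\sin\alpha\,\beta_2$, so the unknowns are $t,\alpha,\lambda$. Using $\transp{J_2}(z-u)=(t\sin\phi\cos\alpha,\ t\sin\phi\sin\theta\sin\alpha)$ and $\hat\phi=\lambda t\sin\phi$ gives $\cos\alpha=\hat\beta_1/\hat\phi$ and $\sin\alpha=\hat\beta_2/(\hat\phi\sin\theta)$, which determine $\alpha$ (here $\hat\phi\neq0$ and $\sin\theta\neq0$), hence $\nu$ and $w$. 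Next, since $\beta-w\cos\phi=\sin^2\phi\,\beta-\sin\phi\cos\phi\,\nu$, the relation $\hat u=\transp{J_1}\zeta$ becomes $\hat u=\lambda\,g$ with $g=\big(\sin\phi\cos\phi\,(\nu\cdot r_i)-\sin^2\phi\,(\beta\cdot r_i)\big)_{i=1,2}$ a known vector; because $(z,\zeta)$ is accessible, $\zeta$ is not conormal to $\mathcal S$, i.e.\ $\hat u\neq0$, so $g\neq0$ and $\lambda$ is recovered. Finally $t=\hat\phi/(\lambda\sin\phi)$ yields $z=u+t\,w$ and $\zeta=-\lambda(\beta-w\cos\phi)$; since every step is forced, the solution is unique.

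For (c), injectivity of $\pi_\mathcal M$ on $\pi_M^{-1}(D)$ is immediate: two preimages of the same point share the same $(z,\zeta)$ by (b), hence coincide. For the immersion I would use that the formulas of (b) define a smooth map $F$ on a neighborhood of the image (they require only $\hat\phi\neq0$, $\sin\theta\neq0$, $g\neq0$, all open conditions) with $\pi_M=F\circ\pi_\mathcal M$ along $\pi_M^{-1}(D)$. Then for $v\in T_pC_I$ with $d\pi_\mathcal M(v)=0$ one has $d\pi_M(v)=dF\,d\pi_\mathcal M(v)=0$; since $v\mapsto(d\pi_\mathcal M(v),d\pi_M(v))$ is injective on $T_pC_I$, this gives $v=0$, so $d\pi_\mathcal M$ has full rank $8$. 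When $\mathcal S$ satisfies Tuy's condition, $D=T^*M$ and $\pi_M^{-1}(D)=C_I$, so $\pi_\mathcal M$ is globally an injective immersion. The main obstacle is precisely the solvability of $\hat u=\lambda g$: the nonvanishing of $g$, which is the translation of accessibility into the statement that an accessible $\zeta$ is not conormal to $\mathcal S$, is what simultaneously makes the reconstruction of $\lambda$ well defined and forces $d\pi_\mathcal M$ to be injective.
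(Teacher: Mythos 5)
Your proposal is correct and, for parts (a) and (b), is essentially the paper's own argument: confine $u$ to $U_{(z,\zeta)}$ via $\zeta\cdot(z-u)=0$ and the implicit function theorem using accessibility, then solve explicitly for $\beta,\lambda$ given $(u,\phi)$, and invert the system $(\varphi=0,\hat\beta,\hat\phi,\hat u)$ to recover $m=(z-u)/|z-u|$ from its components in the orthogonal frame $\beta,\beta_1,\sin\theta\,\beta_2$ and then $\lambda$ from $\hat u$, using accessibility to guarantee $\hat u\neq 0$. Two points of divergence are worth recording. First, in (a) you correctly observe the sign ambiguity $\lambda=\pm|\zeta|/\sin\phi$, $\beta=m\cos\phi\mp\sin\phi\,\zeta/|\zeta|$: the fiber $C_I(z,\zeta)$ genuinely consists of two sheets (two cones through $z$ conormal to $\zeta$ for each $(u,\phi)$), whereas the paper writes down only the $\lambda>0$ branch; your formulation is the more careful one and does not affect (b) or (c). Second, for the immersion in (c) you take a genuinely different route: the paper parameterizes $C_I$ by $(u,\beta,z,\hat\phi)$ and computes the rank of the Jacobian of $\pi_\mathcal{M}$ directly, reducing full rank to the nonvanishing of a $3\times 3$ block which again encodes accessibility; you instead promote the reconstruction of (b) to a smooth map $F$ on an open neighborhood of the image with $F\circ\pi_\mathcal{M}=\pi_M$ on $C_I$, and conclude $d\pi_\mathcal{M}(v)=0\Rightarrow d\pi_M(v)=0\Rightarrow v=0$ since $(d\pi_\mathcal{M},d\pi_M)$ is the differential of the embedding $C_I\hookrightarrow T^*\mathcal{M}\times T^*M$. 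This is cleaner and avoids the explicit matrix, but it does require that $F$ really be defined on an open set of the $10$-dimensional $T^*\mathcal{M}$, not just on the $8$-dimensional image: the relation $\hat u=\lambda g$ is overdetermined off the image, so you should fix a smooth extension there, e.g.\ $\lambda=\hat u_i/g_i$ for a component with $g_i\neq 0$, or $\lambda=(\hat u\cdot g)/|g|^2$, and likewise define $m$ by the formula $\cos\phi\,\beta+\sin\phi\,(\hat\beta_1\beta_1+\tfrac{1}{\sin\theta}\hat\beta_2\beta_2)/\hat\phi$ without insisting it be unit. With that one sentence added, your argument is complete and trades the paper's computation for a softer functorial one.
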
 
Before the proof, the following are two examples to illustrate its first statement.
   \begin{example}
    Suppose $\mathcal{S}$ is a plane. W.L.O.G., assume $\mathcal{S} = \{u^3 = 0\}$.
    Let $z = (z^1,z^2, z^3), \zeta = (\zeta_1,\zeta_2,\zeta_3)$. 
    If $\zeta_1 = \zeta_2 = 0$, then there are no cones that are conormal to $(z,\zeta)$.
    Here we assume $\zeta_2 \neq 0$.
    We solve
    \[
    (z-u) \cdot\zeta = \zeta_1(z^1 - v^1) + \zeta_2(z^2 - v^2) + \zeta_3 z^3 = 0
    \]
    to get $v^2= \frac{1}{\zeta_2}(z\cdot \zeta - v^1 \zeta_1)$. 
    Thus, for $(u,\beta,\phi, \hat{u}, \hat{\beta},\hat{\phi}) \in C_I(z,\zeta)$, we have
    \begin{align*}
    &u= (v^1,\frac{1}{\zeta_2}(z\cdot \zeta - v^1 \zeta_1),0 ),
    &\beta = \cos \phi \frac{z-u}{|z-u|} - \sin \phi \frac{\zeta}{|\zeta|},\\
    &\phi, v^1 \text{ arbitrary,}
    &\lambda = \frac{1}{\sin\phi} |\zeta|.
    \end{align*}
    \begin{figure}[h]
        \centering
        \begin{subfigure}{0.5\textwidth}
            \centering
            \includegraphics[height=0.5\textwidth]{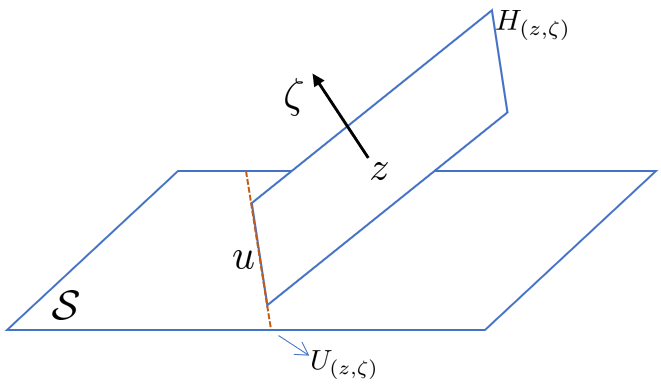}
        \end{subfigure}%
        \begin{subfigure}{0.5\textwidth}
            \centering
            \includegraphics[height=0.45\textwidth]{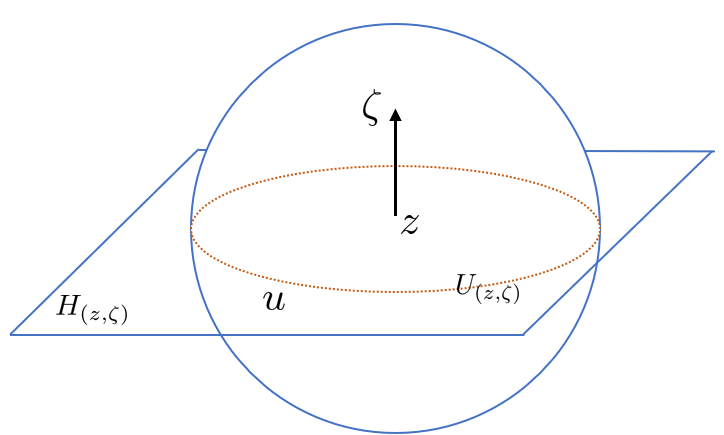}
        \end{subfigure}
        \caption{Example 1 and 2}\label{exampleofu}
    \end{figure}
\end{example}
\begin{example}\label{sphereeg}
    Suppose $\mathcal{S}$ is a unit sphere. 
    Note in this case  $\mathcal{S}$ satisfies Tuy's condition w.r.t. any inside domain away from it.
    It can be covered by six coordinates charts.
    We consider the special case that $z = (0,0,0)$ and $\zeta = (0,0,1)$.
    Then \[
    U_{(z,\zeta)} = \{(v^1,v^2,0), \ v^1 v^1 + v^2 v^2 = 1 \}.
    \]
    In a small neighborhood of $U_{(z,\zeta)}$, 
    the vertex $u$ can be parameterized by one of the following
    \[(v^1, \sqrt{1 - (v^1)^2 - (v^3)^2}, v^3), \quad (v^1, -\sqrt{1 - (v^1)^2 - (v^3)^2 }, v^3),\]
    \[(\sqrt{1 - (v^2 )^2 - (v^3 )^2},v^2, v^3), \quad (-\sqrt{1 - (v^2)^2 - (v^3 )^2},v^2, v^3).\]
    Notice that $\partial_{v^3} u \cdot \zeta \neq 0$ and therefore $U_{(z,\zeta)}$ can be parameterized by $v^1$ or $v^2$ from the proof above. This can also be seen from $ U_{(z,\zeta)} = \{(v^1,v^2,0), \ v^1 v^1 + v^2 v^2 = 1 \}$ itself.
\end{example}
\begin{proof}[Proof of Proposition \ref{InjectiveImmersion}]
    For (a), given $(z,\zeta)$, we are going to find out all possible solutions of $(u,\beta,\phi, \lambda)$ from the canonical relation in Proposition \ref{prop_CI}. 
    We have some freedom to choose $u \in \mathcal{S}$,
    but with $\zeta = - \lambda(\beta - \frac{z-u}{|z-u|} \cos\phi  )$, 
    the vector $(z-u)$  must be conormal to $\zeta$.
    This coincides with the fact that the singularity $(z,\zeta)$ can only be possibly detected by the cones that it is conormal to.
    In other words, the possible choice of $u$ is the set $ U_{(z,\zeta)}$. 
    Indeed, the vertex $u$ should satisfy the equation $g(z,\zeta,u) = 0$, where
    $g= (z-u) \cdot \zeta$. The Jacobian matrix is listed in the following,
    \begin{figure}[h]
        \includegraphics[height=0.12\textwidth]{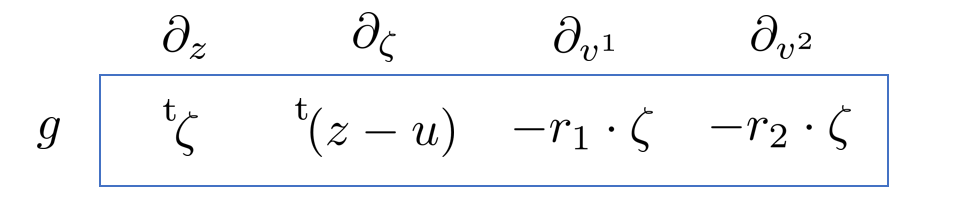}
    \end{figure}\\
    with $r_1 ={\partial_{v^1}} u$, $r_2 ={\partial_{v^2}} u$.
    Here $r_1, r_2$ form a basis of the tangent space $T_u \mathcal{S}$.
    Since $(z,\zeta)$ is accessible,
    the inner products $r_1 \cdot \zeta $ and $r_2 \cdot \zeta $ cannot vanish at the same time; 
    otherwise, the covector $\zeta$ is normal to $\mathcal{S}$ at $u$. 
    We simply assume $r_2 \cdot \zeta \neq 0 $ in a small neighborhood of fixed $u_0 \in U_{(z,\zeta)}$. 
    In this neighborhood, the derivative $\partial_{v^2} g \neq 0$.
    Applying implicit function theorem, we get $v^2$ is a smooth function of $z,\zeta,v^1$ near $u_0$. 
    Locally $U_{(z,\zeta)}$ can be parameterized by $v^1$.
    
    Next, with $u_0$ given, 
    by choosing $\phi$, the axis $\beta$ can be determined by $\beta = \cos \phi \frac{z-u}{|z-u|} - \sin \phi \frac{\zeta}{|\zeta|}$ and $\lambda = \frac{1}{\sin\phi} |\zeta|$.
    From Proposition \ref{prop_CI},
    \[
    \hat{u} = \transp{J}_1(v_1,v_2) \zeta, \quad
    \hat{\beta} = \lambda \transp{J}_2(\beta) (z-u), \quad
    \hat{\phi} = \lambda |z-u|\sin \phi.
    \]
    These are all smooth functions of $z, \zeta, u, \beta, \phi, \lambda$ and therefore smooth functions of $z, \zeta, v^1, \phi$.
    The map $(v^1, \phi) \mapsto (u,\beta,\phi,\hat{u},\hat{\beta},\hat{\phi}) \in C_I(z,\zeta)$ is an immersion. Thus, 
    $(v^1,\phi)$ is a local parameterization of $C_I(z,\zeta)$. This proves statement (a).

    For (b), to recover $(z,\zeta)$ from given $(u,\beta,\phi,\hat{u},\hat{\beta},\hat{\phi})$, we are solving the following system of equations.
    \begin{align*}
    \Label{one} & (z-u) \cdot \beta - |z-u|\cos \phi = 0  & \Label{two} & \lambda((z-u) \cdot \beta_1, \sin \theta (z-u)\cdot \beta_2) = \hat{\beta} \\
    \Label{three} & \lambda |z-u| \sin \phi = \hat{\phi} & \Label{four} & \lambda (\beta - \frac{(z-u)}{|z-u|}\cos\phi) = \hat{u} 
    \end{align*}
    Recall $(z-u)/|z-u| = m \in S^2$. 
    We have $\hat{\phi}$ is always nonzero
    and $\hat{u}$ is nonzero with the assumption that $(u,\beta,\phi,\hat{u},\hat{\beta},\hat{\phi}) \in \pi_{\mathcal{M}} \pi^{-1}_M(D)$.
    Indeed, $\hat{u}$ vanishes if and only if $\transp{J_1} (\beta - m\cos\phi) = (r_1 \cdot (\beta - m\cos\phi), r_2 \cdot (\beta - m\cos\phi))= 0 \implies (r_1 \cdot \zeta, r_2 \cdot \zeta) =0$.
    As we stated before, this contradicts with the assumption. 
    
    We are solving the system. Divided by $|z-u|$ and $\lambda|z-u|$ respectively, 
    the equations (\ref{one}) and (\ref{two}) give us the projection of $m$ along $\beta$, $\beta_1$, and $\sin \theta \beta_2$. 
    These vectors form a orthogonal basis in $\mathbb{R}^3$ and we can get $m$ from the projection.
    Plugging back $m$ into equation (\ref{four}), we can solve $\lambda$. 
    Thus, when $\theta \neq 0$ or $\pi$
    \begin{align}\label{solution_z}
    &z = u + \frac{\hat{\phi}}{\lambda \sin\phi} m 
    \quad &  \zeta = - \lambda(\beta - \cos\phi m ),
    \end{align}
    where
    \[
    m =\cos \phi \beta + \frac{\sin \phi}{\hat{\phi}}(\hat{\beta}_1 \beta_1 + \frac{1}{\sin \theta}\hat{\beta}_2 \beta_2) 
    \quad \lambda = \frac{ |\hat{u}|\text{\(\sgn\)}\hat{\phi}}{|\transp{J_1} (\beta - m \cos \phi)|}.
    \]
    When $\theta =0$ or $pi$, this argument still holds if we choose another regular parameterization of $\beta$ there.
    
    For (c), first we prove $\pi_\mathcal{M}: (u,\beta,\phi,\hat{u},\hat{\beta},\hat{\phi},z,\zeta) \mapsto  (u,\beta,\phi,\hat{u},\hat{\beta},\hat{\phi})$ is an immersion.
    It suffices to prove that $\text{\(\rank\)} (d \pi_\mathcal{M}) = \dim \mathcal{C}_I = 8$.
    The canonical relation $C_I$ has a parametrization $(u,\beta,z,\hat{\phi})$, which implies $\dim \mathcal{C}_I = 8$.
    Indeed, we can solve $\phi, \lambda$ directly from these parameters and therefore $C_I$ can be represented by them.
    We list the Jacobian matrix with respect to these variables in the following. 
    Here we write $J_1 = (r_1, r_2)$ 
    with $r_1 ={\partial_{v^1}} u$ and $r_2 ={\partial_{v^2}} u$,
    which form a basis of $T_u^*\mathcal{S}$;
    the matrix $J_2 = (\beta_1, \beta_2)$, 
    where $\beta_1 ={\partial_\theta} \beta$ and $\beta_2 = {\partial_\psi} \beta$ are unit vectors orthogonal to $\beta$.
    The matrix $J_3$ has $\text{\(\rank\)} = 3$; otherwise we have for $j = 1,2$,
    \begin{align*}
    (r_j - (r_j \cdot m)m)\cdot \beta = 0
    \Longrightarrow  (r_j \cdot \beta - m \cos \phi) =0,
    \end{align*}
    which implies $\beta - m \cos \phi$ is normal to $\mathcal{S}$ at $u$. By the same arguments as before, this contradicts with
    the assumption.
    Observe that $\text{\(\rank\)}J_1 = \text{\(\rank\)}J_2 = 2$. This proves that  $\text{\(\rank\)}( d \pi_\mathcal{M}) = 8$.
    \begin{figure}[h]
        \centering
        \begin{subfigure}{0.5\textwidth}
            \centering
            \includegraphics[height=0.6\textwidth]{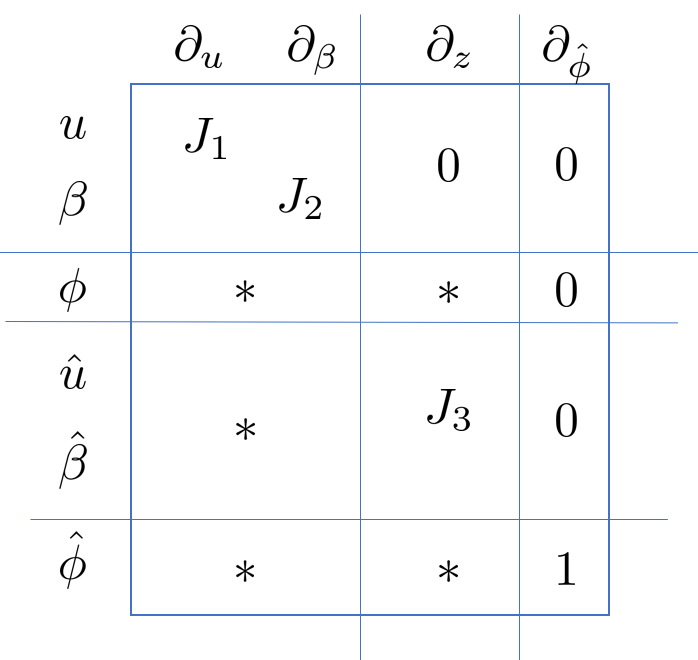}
        \end{subfigure}%
        \begin{subfigure}{0.5\textwidth}
            \centering
            \includegraphics[height=0.5\textwidth]{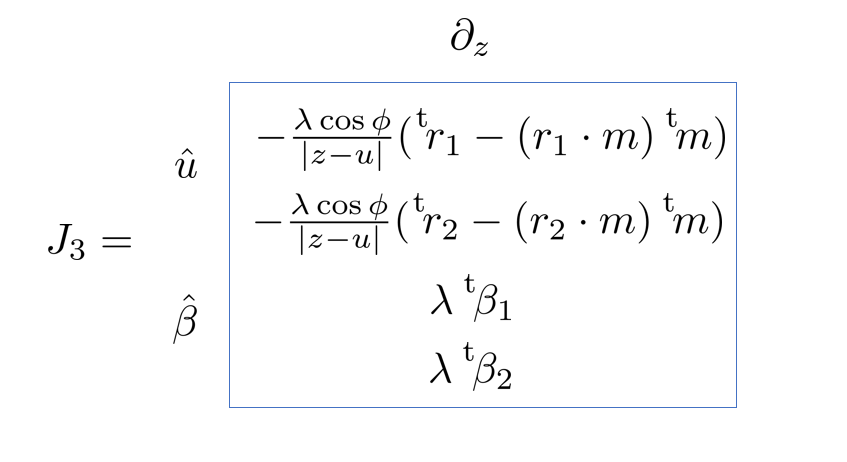}
        \end{subfigure}
    \end{figure}
    
    The injectivity of $\pi_\mathcal{M}$ comes from the statement (b). In particular, if $\mathcal{S}$ satisfies Tuy's condition, 
    then $R = \{(u,\beta,\phi,\hat{u},\hat{\beta},\hat{\phi}) \in C_I(z,\zeta)|\forall (z,\zeta) \in T^*M \} = \pi_\mathcal{M}(C_I)$.
    And for every $(u,\beta,\phi,\hat{u},\hat{\beta},\hat{\phi}) \in \pi_\mathcal{M}(C_I)$, 
    there is a unique $(z,\zeta)$ such that 
    \[
    \pi_\mathcal{M}(u,\beta,\phi,\hat{u},\hat{\beta},\hat{\phi},z,\zeta) = (u,\beta,\phi,\hat{u},\hat{\beta},\hat{\phi}).
    \]   
\end{proof}
\section{Clean Intersection Calculus}\label{sectionCIC}
In this section, we present the clean intersection calculus stated in \cite[Theorem 25.2.3]{Hoermander2009}.
We modify it slightly to better suit our problem.
\begin{pp}\label{CIC}
    Let $X$, $Y$, $Z$ be three $C^\infty$ manifolds.
    Suppose $C_1$ from $(T^*Y \setminus 0)$ to $ (T^*X \setminus 0)$ is a $C^\infty$ homogeneous canonical relation 
    closed in $ T^*(X \times Y) \setminus 0$  and $C_2$ another from $(T^*Z \setminus 0)$ to $ (T^*Y \setminus 0)$. 
    Let \[
    A_1 \in I^{m_1} (X \times Y, C_1'), \quad A_2 \in I^{m_2} (Y \times Z, C_2')
    \] be properly supported FIOs with principal symbol $\alpha_1$ and $\alpha_2$ respectively.
    Assume we have 
    \begin{itemize}
        \item[(A1)] $C_1 \times C_2$ intersects $ T^*X \times \Delta(T^*Y) \times T^*Z$ cleanly with excess $e$. We call the intersection $\hat{C}$,
        \item[(A2)] 
        there exist conically compact subsets $K_1, K_2$ of $C_1, C_2$ respectively, such that the microsupport of $A_1, A_2$ is in some open set of $K_1, K_2$ respectively.
        \item[(A3)] the inverse image $C_\gamma$ under the projection $ \hat{C} \rightarrow T^*(X \times Z)\setminus 0$ of any $\gamma \in C \equiv C_1 \circ C_2$ is connected.
    \end{itemize}
    Then there exists an open set $\mathcal{O}$ in $ T^*X \times T^*Z$ such that $\mathcal{O} \cap C$ is a conic Lagrangian submanifold and 
    \[
    A_1 A_2 \in I^{m_1 + m_2 + e/2} (X \times Z, (\mathcal{O} \cap C)')
    \]
    and for the principal symbol $\alpha$ of $A_1 A_2$ we have 
    \begin{align}\label{principalsymbol}
     \alpha = \int_{C_\gamma} \alpha_1 \times \alpha_2, 
    \end{align}   
    where $\alpha_1 \times \alpha_2$ is the density on $C_\gamma$ as is defined in \cite[Theorem 25.2.3]{Hoermander2009}.
\end{pp}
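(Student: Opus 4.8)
The plan is to adapt Hörmander's proof of \cite[Theorem 25.2.3]{Hoermander2009} on a microlocal piece and replace the global properness hypothesis by the conically compact microsupport assumption (A2). The only place properness enters the classical argument is in guaranteeing that the fibered composition of the two Lagrangian kernels converges and that $\WF(A_1 A_2) \subset C_1 \circ C_2$; I would show that (A2) supplies exactly this, locally.

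First I would pass to phase-function representations. Since $K_1, K_2$ are conically compact, I cover them by finitely many open conic sets on each of which $C_1, C_2$ are parametrized by nondegenerate phase functions, and insert a microlocal partition of unity so that $A_1 = \sum_i A_1^{(i)}$ and $A_2 = \sum_j A_2^{(j)}$ with each summand microsupported in one such set. Conical compactness forces the sums to be finite, so it suffices to treat a single pair $A_1^{(i)} A_2^{(j)}$.

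Next I would verify that on each piece properness is automatic. Writing the composite kernel as an oscillatory integral in the oscillatory variables of the two local phases together with the fiber variable over $Y$, the conically compact microsupport confines the middle $(y,\eta)$-integration to a conically compact subset of $T^*Y \setminus 0$; on that subset the projection $\hat{C} \to T^*(X\times Z)\setminus 0$ coming from the clean intersection (A1) is proper. This is the crux of the modification and the point where Vasy's observation substitutes for Hörmander's properness: clean intersection with excess $e$ controls the transverse fiber directions, while conical compactness controls the radial and base directions, so the composition is a well-defined Lagrangian distribution whose wavefront set lies in $C$. Applying the clean composition on this piece then yields the order $m_1 + m_2 + e/2$ and expresses the principal symbol as the fiber integral over the excess fiber, exactly as in \cite[Theorem 25.2.3]{Hoermander2009}.

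Finally I would assemble the global statement. I take $\mathcal{O}$ to be the union over the finitely many pairs of the open subsets of $T^*X \times T^*Z$ on which the projected clean intersection is an embedded conic Lagrangian, so that $\mathcal{O}\cap C$ is a Lagrangian submanifold; summing the local contributions gives $A_1 A_2 \in I^{m_1+m_2+e/2}(X\times Z,(\mathcal{O}\cap C)')$, and the connectedness hypothesis (A3) on each $C_\gamma$ guarantees that the several local symbol densities patch into a single well-defined density, reproducing the integral (\ref{principalsymbol}). The main obstacle is the middle step: rather than any new computation, the work lies in checking cleanly that conical compactness of the microsupport, combined with the excess-$e$ clean intersection, really does make the relevant projection proper, since once properness is available on each piece the order count and the symbol formula follow exactly as in the classical theorem.
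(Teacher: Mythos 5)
Your overall strategy is the paper's: the only use made of properness is that the projection $\Pi:\hat{C}\to T^*(X\times Z)\setminus 0$ restricted to a conically compact piece of $\hat{C}$ determined by the microsupports is automatically proper (it is homogeneous and the piece is compact in the cosphere bundle), after which Hörmander's argument runs. Your middle step is essentially the paper's Claim \ref{Oclosed}, and your partition-of-unity version also takes care of the convergence of the fiber integral in (\ref{principalsymbol}), which the paper instead handles by showing via \cite[Lemma 4.1]{Treves1980} that the principal symbols vanish off the microsupport.

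The gap is in your assembly step. You ``take $\mathcal{O}$ to be the union of the open subsets of $T^*X\times T^*Z$ on which the projected clean intersection is an embedded conic Lagrangian,'' but the existence of such open sets is precisely what has to be proved once global properness is dropped: the image of a constant-rank map need not be locally embedded near a bad point (the figure-$6$ and figure-$8$ examples in the paper show both failure modes), so this cannot be asserted; it must be derived. The paper does this with Lemma \ref{constantrankclosed}: the restricted projection has constant rank by \cite[Theorem 21.2.14]{MR2304165}, it is a \emph{closed} map because it is proper on the conically compact set $\hat{K}$ sandwiching the microsupport, and its fibers are \emph{connected} by (A3); only the conjunction of these three yields an embedded image. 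This also shows you have misassigned the role of (A3): it is not there to patch local symbol densities, but to prevent $C$ from self-intersecting --- two components of a fiber $C_\gamma$ would produce two branches of $C$ through the same point $\gamma$, and then no open set around $\gamma$ makes $C$ embedded. For the same reason your union over finitely many microlocal pieces is dangerous even granting each piece: two different pieces can contribute two distinct local branches of $C$ through a common point of $\mathcal{O}$, so the union of ``good'' open sets need not be good. You need the single sandwiched open set $\hat{W}\subset\hat{\mathcal{O}}\cap\hat{C}\subset\hat{K}$ together with the closed-map-plus-connected-fibers lemma (or an equivalent) to close this step.
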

\begin{remark*}
    If we compare this proposition with Theorem 25.2.3, the difference is that H{\" o}rmander assumes the restricted projection $\Pi: \hat{C} \rightarrow T^*(X \times Z)\setminus 0$ is proper instead of condition (2) above.
    The properness of restricted $\Pi$ has the following implications in the original proof.
    First, since by \cite[Theorem 21.2.14]{MR2304165} the restricted $\Pi$ has constant rank, if $\Pi$ is proper and has connected fibers (i.e. the preimage of a single point is connected, that is condition (3)), then the image $C = \Pi(\hat{C})$ is an embedded submanifold, see Lemma \ref{constantrankclosed} and its remark.
    Second, since a continuous map is proper if and only if it is closed and has compact fibers (i.e. the preimage of a single point is compact), we have $C$ is a closed submanifold and $C_\gamma$ is compact. The later implies the integral 
    $\int_{C_\gamma} \alpha_1 \times \alpha_2$ is well-defined.
\end{remark*}
\begin{proof}[Proof of Proposition \ref{CIC}: part 1]
    We prove in the following that there exists an open set $\mathcal{O} \subset T^*X \times T^*Z$ such that $\mathcal{O} \cap C$ is an embedded submanifold of $T^*X \times T^*Z$.
    
    Let $\Pi$ be the projection 
    \[
    \Pi: T^*X \times \Delta(T^*Y) \times T^* Z \rightarrow T^*X \times T^*Z.
    \]
    Since $\Pi$ is an open map, we can choose $\mathcal{O}$ by choosing an open subset $\hat{\mathcal{O}}$ of $T^*X \times \Delta(T^*Y) \times T^* Z$ by condition (2). 
    More specifically, recall $\hat{C} = (C_1 \times C_2)\cap (T^*X \times \Delta(T^*Y) \times T^*Z)$ and we define 
    \begin{align*}
    \hat{K} &= (K_1 \times K_2)\cap (T^*X \times \Delta(T^*Y) \times T^*Z),\\
    \hat{W} &= (\text{\( \WF \)}(A_1) \times \text{\( \WF \)}(A_2))\cap (T^*X \times \Delta(T^*Y) \times T^*Z)
    \end{align*}
    Let $\hat{\mathcal{O}}$ be an open set such that $\hat{W} \subset \hat{\mathcal{O}} \cap \hat{C} \subset \hat{K} \subset \hat{C}$. Then we have $\mathcal{O} = \Pi(\hat{\mathcal{O}})$ is an open subset.
    
    We can show $\mathcal{O} \cap C$ is an embedded submanifold in two ways. 
    On the one hand, since $\Pi$ restricted on $\hat{K}$ is proper and $\hat{\mathcal{O}} \cap \hat{C} \subset \hat{K}$, 
    by the proof of Lemma \ref{constantrankclosed}, each point in $\mathcal{O} \cap C$ has a submanifold coordinate chart.
    On other other hand, we can prove it by the following claim. 
    \begin{cl}\label{Oclosed}
        The restricted projection $\Pi: \hat{\mathcal{O}} \cap \hat{C} \rightarrow \mathcal{O}$ is a closed map.
    \end{cl}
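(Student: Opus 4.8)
The plan is to prove Claim~\ref{Oclosed} by a sequential argument, replacing the properness of $\Pi$ that H\"ormander uses by the conic compactness built into hypothesis (A2). First I would record the structural simplification $\hat{\mathcal O}\cap\hat C=\hat{\mathcal O}\cap\hat K$, which holds because $\hat K\subset\hat C$ and $\hat{\mathcal O}\cap\hat C\subset\hat K$; thus the domain of the restricted projection is a relatively open subset of the conically compact set $\hat K$. Since all spaces in sight are second countable and metrizable, closedness of $\Pi:\hat{\mathcal O}\cap\hat C\to\mathcal O$ is equivalent to the following: whenever $F$ is relatively closed in $\hat{\mathcal O}\cap\hat K$ and a sequence $q_n\in F$ satisfies $\Pi(q_n)\to p$ with $p\in\mathcal O$, one has $p\in\Pi(F)$.

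The heart of the argument is a compactness extraction, and this is where (A2) does the work that properness did before. Write $q_n=(x_n,\xi_n,y_n,\eta_n,y_n,\eta_n,z_n,\zeta_n)$, so that its first and second halves lie in $K_1$ and $K_2$ respectively and $\Pi(q_n)=(x_n,\xi_n,z_n,\zeta_n)\to p\neq 0$. Because $K_1,K_2$ are conically compact, the base points $(x_n,y_n,z_n)$ and the codirections stay in fixed compact sets and only the radial magnitudes are free; on these compact sets the ratios $|\eta_n|/|\xi_n|$ and $|\eta_n|/|\zeta_n|$ are bounded above and below. Since $\Pi(q_n)$ converges to the nonzero covector $p$, the magnitudes $|\xi_n|,|\zeta_n|$ are bounded and bounded away from $0$, and hence so is $|\eta_n|$. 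Consequently $\{q_n\}$ is contained in a genuinely compact subset of $\hat K$, and after passing to a subsequence I obtain $q_{n_k}\to q\in\hat K$ (using that $\hat K$ is closed); by continuity $\Pi(q)=p$. Applied to arbitrary sequences in $\hat K$, the same reasoning shows that $\Pi|_{\hat K}$ is itself a closed map onto its image, which I will reuse below.

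To finish I would observe that $q\in\overline F$, and since $F$ is relatively closed in $\hat{\mathcal O}\cap\hat K$ it suffices to check $q\in\hat{\mathcal O}$, for then $q\in F$ and $p=\Pi(q)\in\Pi(F)$. This last point is the main obstacle, and it is genuine: the limit $q$ could in principle sit on $\partial\hat{\mathcal O}$ while its image $p$ still lies in $\mathcal O=\Pi(\hat{\mathcal O})$, because the fiber $\Pi^{-1}(p)\cap\hat K$ need not lie on one side of the boundary---the projections of a canonical relation fold, exactly as $\pi_{\mathcal M}$ does in Proposition~\ref{propertyofC}. I would remove this difficulty by arranging the localizing set $\hat{\mathcal O}$ to be $\Pi$-saturated over its image, i.e.\ $\hat{\mathcal O}\cap\hat K=\Pi^{-1}(\mathcal O)\cap\hat K$. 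The delicate part of the write-up will be producing such an $\hat{\mathcal O}$ while keeping the admissibility $\hat W\subset\hat{\mathcal O}\cap\hat C\subset\hat K$: starting from any admissible neighborhood $\hat{\mathcal O}_0$, the closedness of $\Pi|_{\hat K}$ just established makes $\Pi(\hat K\setminus\hat{\mathcal O}_0)$ closed, so it can be excised from $\mathcal O$; here I expect to use the connectedness of the fibers in (A3), together with an enlargement of $\hat{\mathcal O}_0$ to contain the (compact, modulo scaling) fibers through $\hat W$, to ensure that the microsupport $\hat W$ survives the excision. Once $\hat{\mathcal O}$ is saturated, $\Pi(q)=p\in\mathcal O$ forces $q\in\hat{\mathcal O}$, and the proof of the Claim is complete.
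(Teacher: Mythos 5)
Your main mechanism coincides with the paper's: conic compactness of $\hat K$, together with the fact that $\Pi$ commutes with the fiberwise dilations and that the covectors on $\hat C$ stay away from the zero sections, makes $\Pi|_{\hat K}$ proper and hence closed. Your sequential extraction (controlling $|\eta_n|$ via the ratios $|\eta_n|/|\xi_n|$ and $|\eta_n|/|\zeta_n|$, which are bounded above and below on the compact base of the cone) is a correct, expanded version of what the paper asserts in one line ("$\hat K$ is conically compact and $\Pi$ preserves the fiber, which implies $\Pi$ restricted to $\hat K$ is proper").

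Where your argument stops being a proof is exactly the step you flag yourself: passing from closedness of $\Pi|_{\hat K}$ to closedness of $\Pi$ on $\hat{\mathcal O}\cap\hat C$. You are right that this is the real issue --- the paper writes a relatively closed $S$ as $S_0\cap\hat{\mathcal O}$ with $S_0$ closed in $\hat K$ and then asserts $\Pi(S)=\Pi(S_0)\cap\mathcal O$, of which only the inclusion $\subseteq$ is automatic; the limit point produced by properness lands in $S_0$ but possibly outside $\hat{\mathcal O}$. However, your proposed repair (re-choosing $\hat{\mathcal O}$ to be $\Pi$-saturated over $\hat K$) is only sketched, and as sketched it does not close. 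Enlarging $\hat{\mathcal O}_0$ to contain the fibers through $\hat W$ conflicts with the constraint $\hat{\mathcal O}\cap\hat C\subset\hat K$ on which your properness argument depends: the fiber $C_\gamma$ through a point of $\hat W$ lives in $\hat C$ and nothing confines it to $K_1\times K_2$; moreover such fibers are compact modulo scaling only after intersecting with $\hat K$ (if $\Pi|_{\hat C}$ itself had compact fibers we would be back in H\"ormander's proper setting and no modification would be needed). Even the saturated set $\Pi^{-1}(\Pi(\hat W))\cap\hat K$ may touch the relative boundary of $\hat K$ in $\hat C$, so it need not admit an admissible open neighborhood inside $\hat K$. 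You must either prove that the excision $\mathcal O\setminus\Pi\bigl(\hat K\setminus\hat{\mathcal O}_0\bigr)$ still contains $\Pi(\hat W)$ --- which requires that no point of $\hat K\setminus\hat{\mathcal O}_0$ share its $\Pi$-image with a point of $\hat W$, and that is not automatic --- or settle for closedness over a smaller open set still containing $\Pi(\hat W)$, which in fact suffices for the application via Lemma \ref{constantrankclosed}. In its current form the saturation step is a genuine gap; note, for what it is worth, that the paper's own proof of the Claim elides the very same point by asserting the set identity $\Pi(S)=\Pi(S_0)\cap\mathcal O$ without argument.
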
 
    Also by Lemma \ref{constantrankclosed}, we have $\mathcal{O} \cap C$ is an embedded submanifold of $\mathcal{O}$ and therefore that of $T^*X \times T^*Z$.
    
    Then, by Claim \ref{waveset}, the wave front set of $A$ is contained in $\Pi(\hat{W})$ and thus is contained in $ \mathcal{O} \cap C$.
    Now we can define the Fourier integral distributions on the embedding Lagrangian submanifold $C \cap \mathcal{O}$ by 
    \cite[Lemma 25.1.2]{Hoermander2009} and its remark. Although $\mathcal{O} \cap C$ is not necessarily closed, by the remark in \cite[p.~147]{Hoermander1971}, we can require the symbols (the amplitude in any specific representation) vanishing outside a closed conic subset and have the same conclusions about the principal symbols.
\end{proof}
\begin{proof}[Proof of Proposition \ref{CIC}: part 2]
    We follow the proof of H{\"o}rmander and skip most of it here.
    One can first show $A_1 A_2$ can be written as a sum of an FIO associated with $\mathcal{O} \cap C$ and a smoothing operator, and then compute the principal symbol.
    The only difference is that we still need to verify that the integration in (\ref{principalsymbol}) is well-defined.
    We claim that the principal symbol $\alpha_1$ and $\alpha_2$ have conically compact support and therefore it is well defined. 
    Indeed, consider the local representation of $A_1, A_2$
    \begin{align*}
    & A_1(x,y) = (2\pi)^{-(n_X + N_Y + 2N_1)/4} \int e^{i\phi(x,y,\theta)} a_1(x,y,\theta) \diff \theta,\\
    & A_2(y,z) = (2\pi)^{-(n_X + N_Y + 2N_2)/4} \int e^{i\psi(x,y,\tau)} a_2(x,y,\tau) \diff \tau,
    \end{align*}
    where $\phi, \psi$ are non-degenerate phase functions and $a_1, a_2$ are amplitudes. For more details see \cite[Theorem 25.2.3]{Hoermander2009}.
    Then the principal symbols of $A_1, A_2$ are
    \begin{align}\label{principalsymbolformal}
    \alpha_1 = a_1(x,y,\theta) e^{\pi i/4 \sgn H_\phi} d_{C_1}^\frac{1}{2}, \quad  \alpha_2 = a_2(y,z,\tau) e^{\pi i/4 \sgn H_\psi} d_{C_2}^\frac{1}{2},  
    \end{align}    
    according to \cite[p.~14]{Hoermander2009}.
    Since $A_1, A_2$ have conically compact microsupport respectively, it suffices to show that the principal symbol vanishes outside the microsupport. 
    Indeed, for fixed $(x_0, y_0, \xi^0, \eta^0) \notin \text{\( \WF \)}(A_1)$ in the conical relation, 
    there exists a small conic neighborhood of $(x_0, y_0, \xi^0, \eta^0)$ 
    such that the local representation of $A_1$ above is smooth. 
    By \cite[Lemma 4.1]{Treves1980}, there is $a_{1,\infty} \in S^{-\infty}$ such that $a_1 = a_{1,\infty}$ on $\Sigma_\phi = \{(x,y,\theta), \ \phi'_\theta =0 \}$. 
    On the other hand, the principal symbol $\alpha_1$ defined in (\ref{principalsymbolformal}) only depends the amplitude restricted to the the manifold $\Sigma_\phi$. Therefore, it vanishes in this neighborhood.
    
\end{proof}
\begin{lm}\label{constantrankclosed}
    Let $f: X \rightarrow Y$ be a smooth map with constant rank. If $f$ is closed and the preimage $f^{-1}(y)$ of any $y \in f(X)$ is connected, then $f(X)$ is an embedded submanifold of $Y$.
\end{lm}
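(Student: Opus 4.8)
The plan is to verify that $f(X)$ satisfies the local slice condition: writing $r$ for the constant rank of $f$, I want to show that every $q_0\in f(X)$ has a chart of $Y$ in which $f(X)$ is a coordinate $r$-plane. Once this holds at each point of $f(X)$, the conclusion that $f(X)$ is an embedded submanifold is immediate. The local model comes from the constant rank theorem: around any $p\in X$ one can choose charts centered at $p$ and at $f(p)$ in which $f$ becomes the projection $(x',x'')\mapsto(x',0)$, so that $f$ carries a cube neighborhood of $p$ onto an embedded $r$-dimensional slice through $f(p)$. The entire difficulty is global, since a priori the images of different parts of $X$ could cross near $q_0$ (as in an immersed figure eight); the role of the two hypotheses is precisely to exclude this.

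Fix $q_0\in f(X)$ and set $F_0=f^{-1}(q_0)$, a connected closed embedded submanifold. First I would show that the germ at $q_0$ of the local image slice is independent of the point of $F_0$ that produces it. For each $p\in F_0$ the rank theorem gives an adapted chart $U_p$ and a slice $S_p$ with $f(U_p)$ an open piece of $S_p$ through $q_0$; I would check that the germ $[S_p]_{q_0}$ is unchanged as $p$ moves within a single adapted chart, using that two such slices are equidimensional and one contains an open piece of the other near $q_0$, which forces germ equality. Thus $p\mapsto[S_p]_{q_0}$ is locally constant on $F_0$. Here connectedness enters: it upgrades local constancy to a single germ, represented by one embedded $r$-submanifold $S$ through $q_0$, realized as a coordinate slice in a chart $(V,\psi)$ about $q_0$. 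Taking $W=\bigcup_{p\in F_0}U_p$, after shrinking each $U_p$ so that $f(U_p)\subseteq S$, produces an open neighborhood $W$ of the entire fiber with $f(W)$ equal to an open neighborhood of $q_0$ inside the embedded submanifold $S$.

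The final step uses closedness in place of Hörmander's properness. Since $F_0\subseteq W$, the closed set $K=X\setminus W$ satisfies $q_0\notin f(K)$, and because $f$ is a closed map $f(K)$ is closed; hence $V_0=(Y\setminus f(K))\cap V$ is a neighborhood of $q_0$. Any point of $f(X)\cap V_0$ is the image of a point outside $K$, hence of a point of $W$, so it lies in $f(W)\subseteq S$; conversely $S\cap V_0\subseteq f(W)\subseteq f(X)$ after shrinking $V_0$ so that $S\cap V_0$ lands in the open piece $f(W)$. This yields $f(X)\cap V_0=S\cap V_0$, a coordinate slice, which is the desired local slice condition at $q_0$. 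As a byproduct $f(X)$ is even closed in $Y$, since $X$ is closed in itself and $f$ is a closed map.

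The step I expect to be the main obstacle is the middle one: assembling all the local image slices into a \emph{single} embedded submanifold $S$. With properness one could cover the compact fiber by finitely many adapted charts and conclude by a tube-lemma argument, but without compactness of the fiber one must instead propagate the slice germ across the whole connected fiber, and this forces a careful verification that the locally defined image germ is well defined (independent of the adapted chart) and locally constant along $F_0$. Connectedness of the fiber is exactly what makes this propagation global, while closedness of $f$ is exactly what excludes extraneous sheets of the image accumulating at $q_0$; together they replace the single properness hypothesis of \cite[Theorem 25.2.3]{Hoermander2009}.
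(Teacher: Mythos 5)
Your proposal is correct and follows essentially the same route as the paper: local slices from the constant rank theorem, propagation of the image germ along the connected fiber (the paper phrases this as an equivalence relation on $f^{-1}(q_0)$ whose classes are open, so connectedness forces a single class), and closedness of $f$ to exclude extraneous sheets of the image near $q_0$. The only real difference is in the last step, where you construct the separating neighborhood directly as $(Y\setminus f(X\setminus W))\cap V$, while the paper argues by contradiction with a sequence $y_k\to q_0$, splitting into the cases where the preimages $x_k$ do or do not accumulate in $X$.
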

\begin{remark*}
    In particular, this lemma holds if we assume $f$ is proper instead of simply closed. A slightly different version of this lemma can be found in \cite{Guillemin2005}. It claims the proof can be found in Appendix C.3 in \cite{MR2304165}.
    The proof we write in the following is based on the outlines of H{\" o}rmander and we borrow most of it from an online answer that proves the case when $f$ is proper. 
\end{remark*}
\begin{remark*}
    It is necessary to assume that  $f$ is closed. 
    Consider projection $\pi$ from $\mathbb{R}^3$ to the $xy$ plane.
    Let $\nu_1$ be a smooth curve as is shown in Figure \ref{counterexamples} (a), whose image under projection is the figure $6$. Here $f$ is the projection $\pi$ restricted on $\nu_1$. 
    The preimage for each point in $\pi(\nu_1)$ under $f$ is connected but $f$ is not closed. The image $f(\nu_1)$ is not an embedded submanifold.
    \begin{figure}[h] 
        \centering
        \begin{subfigure}{0.5\textwidth}
            \centering
            \includegraphics[height=0.6\textwidth]{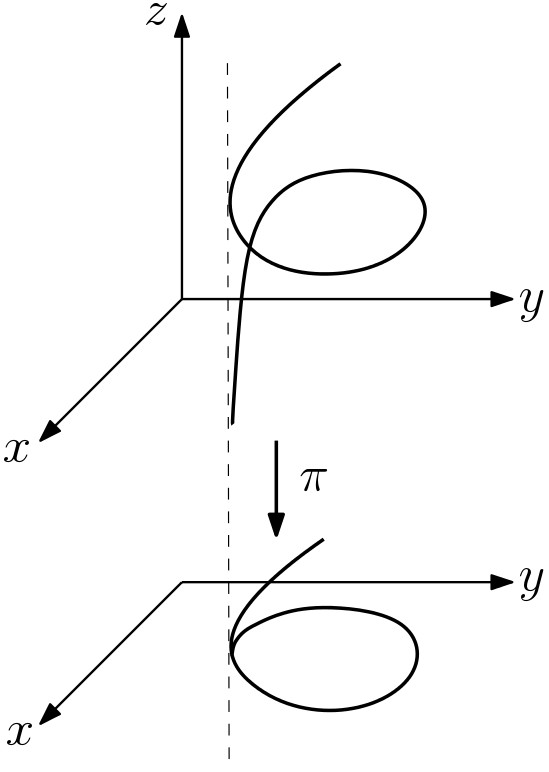}
            \subcaption{}
        \end{subfigure}%
        \begin{subfigure}{0.5\textwidth}
            \centering
            \includegraphics[height=0.6\textwidth]{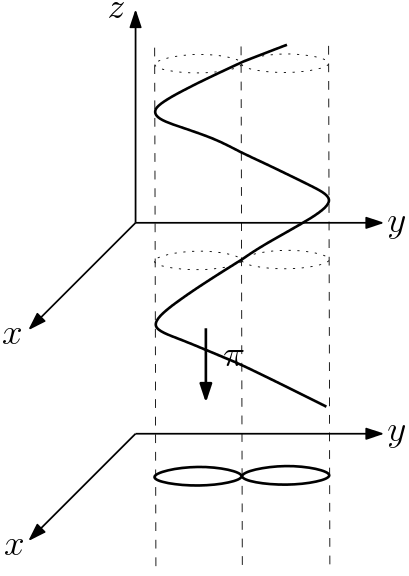}
            \subcaption{}
        \end{subfigure}
        \caption{Counterexamples}
        \label{counterexamples}
    \end{figure}\\
    It is also necessary to assume that each fiber of $f$ is connected. A counterexample shown in Figure \ref{counterexamples} (b) is constructed from the immersed manifold "figure $8$". By lifting it into $\mathbb{R}^3$ and smoothly extended the two ends, we get a smooth curve $\nu_2$ in $\mathbb{R}^3$ and $f$ is defined as before. Notice $f$ is a closed map but the preimage of certain point is not connected.
\end{remark*}
\begin{proof}[Proof of Lemma \ref{constantrankclosed}]
     We followed the suggestions posted in \cite{625526} to prove this lemma. The proof can be divided into three steps.

    \noindent Step 1. By \cite[C 3.3]{MR2304165}, the constant rank of $f$ shows $f(X)$ is locally a submanifold.
    That is, for every $x \in X$ and $y = f(x)$, there exists an open neighborhood $U_x \subset X$ of $x$ and an open neighborhood $V_y \subset Y$ of $y$ such that
    $f(U_x) \subset V_y$ is a submanifold of $V_y$. It suffices to prove that there is an open $W_y\subset V_y$ such that $f(X)\cap W_y=f(U_x) \cap W_y$, thus we have $f(X)$ is a submanifold of $Y$.
    \\
    Step 2. Since $f^{-1}(y)$ is connected, for any $x, x' \in f^{-1}(y)$, we can show $f$ maps the neighborhood of $x, x'$ to the "same" neighborhood of $f(y)$ by defining an equivalence relation. 
    More specifically, we say $x \sim x'$ if for any open neighborhood $O_x$ of $x$ and $O_{x'}$ of $x'$, there exists open neighborhood $U_x \subset O_x$ and $U_{x'} \subset O_{x'}$ such that $f(U_x) = f(U_{x'})$. 
    Observe that each equivalence class is an open set, and different classes are disjoint. 
    Therefore, if there are more than one equivalence class, then they form partition of $f^{-1}(y)$, which contradicts with the connectedness.
    \\
    Step 3. Back to what we want to prove, assume there is no such $W_y$ exists. 
    Then there exists a sequence ${y_k} \in f(X)$ converging to $y$ with each point distinct from $y$ but $y_k \notin f(U_x)$. Pick arbitrary preimage $x_k$ of $y_k$. 
    When $f$ is closed instead of proper, we have two cases. 
    If $\{x_k\}$ has limit points in $X$, then we can choose a subsequence $x_{k_j} \rightarrow x'$, which is forced to be in $f^{-1}(y)$.
    By Step 2, there should be $U_{x'}$ and $\tilde{U}_x \subset U_x$ such that $f(U_{x'}) = f(\tilde{U}_x)$. 
    For large enough $k$, we have $x_k \in U_{x'}$ and therefore $y_k \in f(U_{x'}) \subset f(U_x)$, which contradicts the assumption.
    If $\{x_k\}$ has no limit points, then $\{x_k\}$ is a closed subset $X$. The set $\{y_k\}=f(\{x_k\})$ has a limit point $y$ which is not contained in $\{y_k\}$. This contradicts with the assumption that $f$ is closed.
\end{proof}

\begin{cl}\label{waveset}
    If the microsupport  $\text{\( \WF \)}(A_1)$ and $\text{\( \WF \)}(A_2)$ are conically compact, then the wave front set  $\text{\( \WF \)}(A)$ is conically compact and is contained in $\Pi(\hat{W})$.
\end{cl}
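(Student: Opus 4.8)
The plan is to prove the two assertions in turn: first the inclusion $\WF(A)\subset\Pi(\hat W)$, and then the conic compactness, from which conic compactness of $\WF(A)$ follows at once.

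For the inclusion I would realize the composition at the level of Schwartz kernels. Writing $K_1,K_2$ for the kernels of $A_1,A_2$, the kernel of $A=A_1A_2$ is obtained from the exterior tensor product $K_1\otimes K_2\in\mathcal D'(X\times Y\times Y'\times Z)$ by restricting to the partial diagonal $\{y=y'\}$ and then integrating out the $Y$-variable, and I would propagate the wave front set through these three operations using the standard calculus: the estimate for $\WF(K_1\otimes K_2)$, the pull-back estimate for the diagonal embedding, and the push-forward (fibre-integration) estimate (see \cite{Hoermander2009}). The structural fact that makes every step go through is that both canonical relations miss the zero sections, i.e. $C_1\subset(T^*X\setminus 0)\times(T^*Y\setminus 0)$ and $C_2\subset(T^*Y\setminus 0)\times(T^*Z\setminus 0)$ as in Proposition \ref{prop_CI}, so that $\WF(A_1)\subset C_1$ and $\WF(A_2)\subset C_2$ contain no covector with a vanishing component in any factor. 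This makes the diagonal restriction legitimate, since the conormal of $\{y=y'\}$ has vanishing $X$- and $Z$-covectors and hence cannot meet $\WF(K_1\otimes K_2)$, and it annihilates every ``extra'' term carrying a zero $Y$-covector that would otherwise survive in the tensor-product and push-forward estimates. After those terms are discarded the only surviving contribution is exactly the matching condition imposed by $\Delta(T^*Y)$, i.e. $\WF(A)\subset\Pi(\hat W)$. Equivalently, one may quote H\"{o}rmander's composition theorem for kernels directly, whose zero-section hypotheses are here vacuous.

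For conic compactness I would first show $\hat W$ is conically compact. Because $\WF(A_1)\subset C_1$ and $C_1$ misses both zero sections, the continuous functions $|\xi|$ and $|\eta|$ are strictly positive on the compact slice $\WF(A_1)\cap\{|(\xi,\eta)|=1\}$ and hence bounded below there; by homogeneity $|\xi|$ and $|\eta|$ are each comparable to $|(\xi,\eta)|$ on all of $\WF(A_1)$, and likewise $|\eta|$ and $|\zeta|$ are comparable on $\WF(A_2)$. On $\hat W$ the two $Y$-covectors coincide, so $\xi,\eta,\zeta$ are mutually comparable; consequently $\hat W\cap\{|(\xi,\eta,\zeta)|=1\}$ is a closed subset of (a compact base)$\times$(the unit covector sphere) on which no component can degenerate, hence compact, so $\hat W$ is conically compact. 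Since $\Pi$ is fibrewise linear and $|(\xi,\zeta)|\gtrsim|(\xi,\eta,\zeta)|$ on $\hat W$, it descends to a continuous map of the normalized slices, whence $\Pi(\hat W)$ is conically compact. Finally $\WF(A)$ is a closed conic set contained in the conically compact $\Pi(\hat W)$, so it too is conically compact.

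The main obstacle I anticipate is the bookkeeping in the first step: correctly enumerating all the zero-section terms produced by the tensor-product, restriction, and push-forward formulas and verifying that each is killed, so that the wave front estimate collapses precisely onto $\Pi(\hat W)$ rather than a strictly larger set. It is the zero-section-avoidance of $C_1,C_2$ that forces this collapse, and the same fact forces the covector components to be comparable in the compactness argument; thus both halves of the proof rest on the single structural property that the canonical relations meet neither zero section.
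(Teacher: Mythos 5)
Your proposal is correct and follows essentially the same route as the paper: the inclusion is obtained from H\"ormander's composition theorem for wave front sets of kernels (Theorem 8.2.14 in \cite{Hoermander2003}), whose zero-section terms vanish because $C_1$ and $C_2$ avoid both zero sections, and conic compactness follows from the conic compactness of $\hat W$ together with the continuity and fibrewise homogeneity of $\Pi$. Your comparability argument for $|\xi|\sim|\eta|\sim|\zeta|$ on $\hat W$ merely spells out the step the paper asserts tersely (``the intersection is conically compact''), so there is no substantive difference.
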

\begin{proof}
    By \cite[Theorem 8.2.14]{Hoermander2003}, since $\text{\( \WF \)}(A_1), \text{\( \WF \)}(A_1)$ are away from the zero sections, then we have
    \[
    \text{\( \WF \)}'(A) \subset \text{\( \WF \)}'(A_1) \circ  \text{\( \WF \)}'(A_2),
    \]
    where $\text{\( \WF \)}'(\cdot)$ is the twisted relation. Hence, $\text{\( \WF \)}'(A) $ is a closed conic set contained in the image of the projection from the intersection $(\text{\( \WF \)}'(A_1) \times  \text{\( \WF \)}'(A_2)) \cap ( T^*X \times T^*\Delta(Y) \times T^*Z) $ to $T^*X \times T^*Z$. The intersection is conically compact so it is closed. Then the projection restricted there is continuous and it maps compact set to compact set. Moreover, it commutes with the multiplication by positive scalars in the covariant variables, and therefore the image of the intersection is conically compact. Thus, we have $\text{\( \WF \)}'(A) $  is conically compact.
\end{proof}
\begin{proof}[Proof of Claim \ref{Oclosed}]
    Since $ \hat{\mathcal{O}} \cap \hat{C}$ is an open set in $\hat{K}$, any closed subset $S$ is the intersection of $\hat{\mathcal{O}}$ with some closed subset $S_0$ of $\hat{K}$. 
    Notice $\hat{K}$ is conically compact and $\Pi$ preserves the fiber, 
    which implies $\Pi$ restricted to $\hat{K}$ is proper (therefore, is closed). It follows that $\Pi(S_0)$ is closed and therefore $\Pi(S) = \Pi(S_0) \cap \mathcal{O}$ is a closed subset of $\mathcal{O}$.
\end{proof}

\subsection*{Clean Composition}
A special case is when we compose the operator with its adjoint. As the following lemma shows, with certain condition the composition is clean. 
\begin{lm}\label{InjectiveImmersion}
    Suppose $C_2$ from $(T^*X \setminus 0)$ to $ (T^*Y \setminus 0)$ is a $C^\infty$ homogeneous canonical relation 
    closed in $ T^*(Y \times X)\setminus 0$. 
    Let $A_2 \in I^{m} (Y \times X, C_2)$ be a properly supported FIO associated with $C_2$. 
    If the projection $\pi_Y: C_2 \rightarrow T^*Y$ is an injective immersion, then the composition $A_2^*A_2$ is clean. In particular, the canonical relation of the composition map is the identity.
\end{lm}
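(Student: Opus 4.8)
The plan is to realize $A_2^*A_2$ as a composition of the type handled by Proposition \ref{CIC} and to read off both the composed relation and the clean intersection directly from the hypothesis on $\pi_Y$. Recall that the adjoint $A_2^*$ is again a properly supported FIO, with canonical relation the inverse (transpose) relation $C_2^{-1}=\{(x,\xi;y,\eta):(y,\eta;x,\xi)\in C_2\}$. Thus the object to analyze is
\[
\hat C=(C_2^{-1}\times C_2)\cap\bigl(T^*X\times\Delta(T^*Y)\times T^*X\bigr),
\]
where $\Delta(T^*Y)$ is the diagonal; its projection to $T^*X\times T^*X$ is the composed relation $C_2^{-1}\circ C_2$. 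The two things to establish are that this projection is the identity relation and that the intersection defining $\hat C$ is clean. Both will follow from the two halves of the hypothesis: set-theoretic injectivity of $\pi_Y$ controls the relation, while injectivity of $d\pi_Y$ controls cleanliness.

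First I would settle the set-theoretic structure, using only that $\pi_Y$ is injective. A point of $\hat C$ consists of a pair $q=(y,\eta;x,\xi)\in C_2$ and $q'=(y',\eta';x',\xi')\in C_2$ subject to $(y,\eta)=(y',\eta')$, that is $\pi_Y(q)=\pi_Y(q')$. Injectivity of $\pi_Y$ forces $q=q'$, so $\hat C$ is precisely the diagonally embedded copy of $C_2$; the map $q\mapsto(x,\xi;y,\eta;y,\eta;x,\xi)$ is an injective immersion with continuous inverse, hence $\hat C$ is an embedded submanifold with $\hat C\cong C_2$. Since $q=q'$, the two $T^*X$-components coincide, so the image of $\hat C$ in $T^*X\times T^*X$ lies in the diagonal $\Delta(T^*X)$. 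Therefore $C_2^{-1}\circ C_2$ is the identity relation, which is the second assertion of the lemma.

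The heart of the argument, and the step I expect to be the main obstacle, is the tangential (clean) condition
\[
T_p\hat C=T_p(C_2^{-1}\times C_2)\cap T_p\bigl(T^*X\times\Delta(T^*Y)\times T^*X\bigr)
\]
at each $p\in\hat C$ lying over $q\in C_2$; the inclusion ``$\subseteq$'' is automatic, so equality is what must be proved. Identifying tangent vectors of $C_2^{-1}$ with those of $C_2$ by the coordinate swap, a vector in the right-hand side is a pair $(w_1,w_2)$ with $w_1,w_2\in T_qC_2$ whose middle $T^*Y$-components agree, i.e. $d\pi_Y(w_1)=d\pi_Y(w_2)$. Here the immersion hypothesis enters decisively: since $\pi_Y$ is an immersion, $d\pi_Y$ is injective, so $w_1=w_2$. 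Hence the right-hand side reduces to $\{(w,w):w\in T_qC_2\}$, which is exactly $T_p\hat C$ under the identification $\hat C\cong C_2$ from the previous step. Equality holds, so the intersection is clean. I would emphasize that injectivity of $\pi_Y$ alone only pins down the relation; it is the immersion property that supplies the clean intersection.

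Finally I would collect the conclusions. The intersection is clean, with excess $e=\dim C_2-\dim(C_2^{-1}\circ C_2)$, which equals the dimension of the fiber $C_\gamma$ of $\hat C\to\Delta(T^*X)$ over $\gamma=(x,\xi;x,\xi)$, namely $\pi_X^{-1}(x,\xi)\cap C_2$; in the cone-transform application this fiber is the family of cones conormal to a fixed covector, consistent with $\dim\mathcal C(z_0,\zeta^0)=2$. Since the composed canonical relation is $\Delta(T^*X)$, Proposition \ref{CIC} then exhibits $A_2^*A_2$ as an FIO associated with the identity relation, i.e. a $\Psi$DO. The only remaining routine checks are that $A_2^*A_2$ is well defined, guaranteed by proper support of $A_2$, and that $\hat C$ meets the away-from-the-zero-section requirements of the calculus, both immediate from $C_2\subset(T^*Y\setminus 0)\times(T^*X\setminus 0)$.
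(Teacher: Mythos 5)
Your argument is correct and follows essentially the same route as the paper: you identify $\hat C$ with the diagonal copy of $C_2$ using set-theoretic injectivity of $\pi_Y$, and then use injectivity of $d\pi_Y$ to show the tangent space of $\hat C$ exhausts the intersection of the tangent spaces, which is exactly the paper's proof. The extra remarks on the excess and on the composed relation being the identity are consistent with the paper's remark following Proposition \ref{I8I}.
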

We follow the same arguments in \cite{lightraytransform}.
\begin{proof}
    From \cite[Thm.25.2.2]{Hoermander2009}, we have $A_2^* \in I^{m} (X \times Y, (C_2^{-1})')$. 
    By definition, the composition is clean if $\hat{C_2} \equiv ( C_2^{-1} \times C_2 )\bigcap (T^*X \times \Delta (T^*X) \times T^*X)$ is a smooth manifold and its tangent space equals to the intersection of the tangent space of the intersecting manifolds.
    Indeed, let $\gamma_k = (x_k, \xi^k, y_k, \eta^k) \in C_2$, $k = 1, 2$ and $s: X \times Y \rightarrow Y \times X$ be the interchanging map. We have
    \[
    (s^* \gamma_1, \gamma_2) \in \hat{C} \Leftrightarrow \pi_Y(\gamma_1) = \pi_Y(\gamma_2).
    \]
    Since the projection is injective, then $\gamma_1 = \gamma_2$. This implies
    \[
    \hat{C} = \{(s^* \gamma, \gamma), \ \gamma \in C_2\}, 
    \quad T_{(s^*\gamma,\gamma)} \hat{C} = \{(s^*\gamma, s_* \delta_\gamma, \gamma, \delta_\gamma), \ \delta_\gamma \in T_\gamma C_2\}.
    \]
    On the other hand, we have  
    $(s_* \delta_{\gamma_1}, \delta_{\gamma_2}) \in T_{(s^* \gamma_1, \gamma_2)} (C_2^{-1} \times C_2)$ 
    is contained in the tangent space of $T^*X \times \Delta (T^*X) \times T^*X$, if and only if 
    \[
    \pi_Y(\gamma_1) = \pi_Y(\gamma_2), \quad \diff {\pi_Y}(\delta_{\gamma_1}) = \diff {\pi_Y}(\delta_{\gamma_2}).
    \]
    Since $\pi_Y$ is an injective immersion, 
    it follows that $\gamma_1 = \gamma_2$ and $\delta_{\gamma_1} =\delta_{\gamma_2}$, which proves the lemma.
\end{proof}
\begin{remark*}
In the setting of this lemma, the connectedness condition (A3) in Proposition \ref{CIC} is not needed. 
Observe that from the proof of Proposition \ref{CIC} and Lemma \ref{constantrankclosed}, the connectedness of $C_\gamma$ 
is required to guarantee that the composition $C_1 \circ C_2$ does not intersect itself. 
However, in this case we have the composition is the diagonal of $T^*X \times T^*X$, which is automatically not self-intersecting.
\end{remark*}
\section{The normal operator $I_\kappa^*I_\kappa$ as a $\Psi$DO}\label{sectionnormal}
In order to apply the clean composition theorem in Proposition \ref{CIC} to $I_\kappa^*I_\kappa$, 
we need the composition satisfying three assumptions (A1), (A2), (A3). 

For (A1), by Proposition \ref{propertyofC} and Lemma \ref{InjectiveImmersion}, if $T^*M$ is accessible, then the composition ${I_\kappa}^* I_\kappa$ is clean. 
For (A3), see the remark after Lemma \ref{InjectiveImmersion}. In most case in application, the surface $ \mathcal{S}$ is a plane, which makes the situation simpler.
As for (A2), we would like to show that with certain assumptions on the support of $\kappa$ (or by choosing proper smooth cutoff functions), 
we can find a compact subset $K$ of $C_I$ such that the microsupport of $I_\kappa$ (or multiplied by cutoffs) is contained in some open subset of $K$.

\begin{lm} \label{microsupportcutoff}
    Let $\chi_1(z), \chi_2(u, \phi)$ be smooth cutoff functions with compact supports. 
    Then $\chi_2 I_\kappa \chi_1$ is a Lagrangian distribution with conically compact microsupport support in $C_I$. 
    Additionally, there exists a compact set $K \subset C_I$ such that 
    $\text{\( \WF \)}(\chi_2 I_\kappa \chi_1)$ is contained in some open subset of $K$.
    In particular, these statements are true when $\kappa$ has compact support in $\mathcal{M} \times M$.
\end{lm}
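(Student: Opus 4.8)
The plan is to read the wave front set off the oscillatory‑integral representation of $I_\kappa$ and then control it using the standing geometric hypothesis that $\mathcal{S}$ lies away from $M$. First I would note that by Proposition \ref{prop_CI} we have $I_\kappa \in I^{-3/2}(\mathcal{M}\times M, C_I')$, and that multiplication by the smooth compactly supported functions $\chi_1(z)$ and $\chi_2(u,\phi)$ amounts to composing on the right and left with multiplication operators, which are $\Psi$DOs of order $0$ with diagonal canonical relation. Since composition with such operators neither enlarges the wave front set nor changes the associated relation, $\chi_2 I_\kappa \chi_1$ is again an FIO associated with $C_I$ and $\WF(\chi_2 I_\kappa \chi_1)\subset C_I$. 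The sole effect of the cutoffs is to force the base variables into the compact set where $z\in\supp\chi_1$ and $(u,\phi)\in\supp\chi_2$; together with $\beta\in S^2$ and $\phi$ confined to a compact subinterval of $(\epsilon,\pi/2-\epsilon)$, this confines $(u,\beta,\phi,z)$ to a compact subset $B$ of the base.

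The key step is to convert compactness of the base into \emph{conical} compactness of the microsupport. By Proposition \ref{prop_CI} the fibre of $C_I$ over a base point $(u,\beta,\phi,z)$ consists of the covectors $\lambda\,v(u,\beta,\phi,z)$ with $\lambda\neq 0$, where $v$ is assembled from the first derivatives $d_u\varphi, d_\beta\varphi, d_\phi\varphi, d_z\varphi$ and is independent of $\lambda$. Thus the fibre covector is homogeneous of degree one in $\lambda$, so after normalizing to the cosphere bundle its direction depends only on $(u,\beta,\phi,z)$ and $\sgn\lambda$. Because $\mathcal{S}$ is disjoint from $M$, the distance $|z-u|$ is bounded above and below away from $0$ on $B$; because $\phi$ stays in a compact subinterval of $(\epsilon,\pi/2-\epsilon)$, the component $d_\phi\varphi=|z-u|\sin\phi$ is bounded below by a positive constant, so $v$ is continuous and bounded below in norm on $B$. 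Hence $v/|v|$ has compact image, the set of directions occurring in $\WF(\chi_2 I_\kappa \chi_1)$ is compact, and the microsupport is conically compact.

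For the second assertion I would fatten the supports. Choose compact neighborhoods $\tilde K_1\supset\supp\chi_1$ and $\tilde K_2\supset\supp\chi_2$ and set
\[
K=\{(u,\beta,\phi,\hat u,\hat\beta,\hat\phi,z,\zeta)\in C_I : z\in\tilde K_1,\ (u,\phi)\in\tilde K_2\}.
\]
The same homogeneity‑and‑compactness argument shows $K$ is conically compact. The set $\mathcal{U}=\{\gamma\in C_I : \chi_1(z)\neq 0,\ \chi_2(u,\phi)\neq 0\}$ is open in $C_I$, is contained in $K$ (since $\{\chi_i\neq 0\}\subset\supp\chi_i\subset\tilde K_i$), and contains $\WF(\chi_2 I_\kappa\chi_1)$, which is exactly the claim. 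Finally, when $\kappa$ itself has compact support in $\mathcal{M}\times M$, its Schwartz kernel is already supported in a fixed compact set in the $(u,\phi,z)$ variables, so no cutoffs are required and the identical argument applies with $\supp\kappa$ playing the role of the support constraints.

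The main obstacle is the middle step: making precise that the fibre covectors form a conically compact set. This is exactly where the standing hypothesis that the vertices lie on a surface $\mathcal{S}$ disjoint from $M$ is essential, since it is what keeps $|z-u|$, and hence the normalization of $v$, bounded away from both $0$ and $\infty$. Without this separation the quantity $d_\phi\varphi=|z-u|\sin\phi$ could degenerate, $v$ could fail to be bounded below in norm, and the fibre directions could accumulate at the zero section, so conical compactness would break down.
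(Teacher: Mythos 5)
Your argument follows essentially the same route as the paper: both proofs exploit the fact that $C_I$ is parametrized by the base variables together with a single scalar in which the covector components are homogeneous of degree one (you use $\lambda$ and normalize $|v|$; the paper uses $\hat\phi$ and normalizes $|\hat\phi|$ via the map $F_0:(u,\beta,\hat\phi,z)\mapsto C_I$), so that conical compactness of the microsupport reduces to compactness of the base set forced by the cutoffs, with the separation of $\mathcal{S}$ from $M$ and the restriction of $\phi$ to a compact subinterval keeping the fibre direction bounded away from the zero section. That reduction, including the lower bound $d_\phi\varphi=|z-u|\sin\phi>0$, is correct and is exactly the content of the paper's proof.

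There is one step that is wrong as written, though easily repaired. You claim $\WF(\chi_2 I_\kappa\chi_1)\subset\mathcal{U}=\{\gamma\in C_I:\chi_1(z)\neq 0,\ \chi_2(u,\phi)\neq 0\}$. Multiplication by a cutoff only confines the wave front set over the \emph{closed} support $\supp\chi_i$, not over the open set $\{\chi_i\neq 0\}$; since the wave front set is closed, it can contain covectors sitting over boundary points of $\{\chi_i\neq 0\}$ where $\chi_i$ vanishes (e.g.\ $\chi(x_2)\delta(x_1)$ in $\mathbb{R}^2$ with $\chi(0)=0$ but $\chi\neq 0$ nearby has $(0,0;\xi_1,0)$ in its wave front set). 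So $\mathcal{U}$ need not contain $\WF(\chi_2 I_\kappa\chi_1)$. The fix is immediate with the objects you already built: since you chose $\tilde K_1,\tilde K_2$ to be compact \emph{neighborhoods} of the supports, replace $\mathcal{U}$ by the portion of $C_I$ lying over $\operatorname{int}\tilde K_1\times\operatorname{int}\tilde K_2$; this is open in $C_I$, contains the portion over $\supp\chi_1\times\supp\chi_2$ (hence the wave front set), and is contained in the conically compact set $K$.
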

\begin{proof}
    As is shown in the proof of Propostion \ref{propertyofC} (c), we have $(u,\beta,\hat{\phi},z)$ is a parameterization of $C_I$. 
    Thus, the map 
    \begin{align*}
    F_0 : & \mathcal{S} \times S^2 \times \mathbb{R} \setminus 0 \times M \rightarrow {C}_I\\
    & (u,\beta, \hat{\phi}, z) \mapsto (u,\beta,\phi,\hat{u}, \hat{\beta},\hat{\phi}, z, \zeta)
    \end{align*}
    is a continuous submersion. 
    Since we have continuous functions map compact sets to compact sets
    and submersions are open maps that map open sets to open sets, 
    this implies $F_0$ maps compact (or open) set in  $\mathcal{S} \times S^2 \times \mathbb{R} \setminus 0 \times M$  
    to compact (or open) set in $C_I$.
    
    In fact, we have $F_0$ maps conically compact (or open) set to conically compact (or open) set. 
    Indeed, suppose we have a conically compact neighborhood in $\mathcal{S} \times S^2 \times \mathbb{R} \setminus 0 \times M$, we can modify it to a compact neighborhood by restricting $|\hat{\phi}|$. Then the image of the compact set is compact in $C_I$ and therefore is compact in $C_I$ restricted to the cosphere bundle.
    Since $\hat{u}, \hat{\beta}, \hat{\phi}, \zeta$ is homogeneous of order 1 w.r.t. $|\hat{\phi}|$, we have the image of the conically compact (or open)  set is conically compact (or open).
    
    For the first statement of this lemma, notice that $\beta \in S^2$ is compact and we have the compact supports w.r.t $z,u$. Since $\phi \in (\epsilon, \pi/2 -\epsilon)$ might not be the whole range of $\phi = \cos^{-1}(\beta \cdot \frac{z-u}{|z-u|})$, we additionally assume we have compact support w.r.t $\phi$.
    To show the existence of the compact set $K$, observe that for any compact set $K'$ in $M, \mathcal{S}$ or $(\epsilon, \pi/2 -\epsilon)$, we can find a larger compact set such that $K'$ is in some open set of this larger compact set. 
    By the arguments above, it proves the second statement.
\end{proof}
 With the lemma above, assuming $\kappa$ has compact support, we can apply Proposition \ref{CIC} to $I^*_\kappa I_\kappa$ to show it is a $\Psi$DO. Moreover, with additional assumptions it is an elliptic $\Psi$DO, according to the formula \ref{principalsymbol} for composed principal symbols.
\begin{pp}\label{I8I}
    Assume $T^*M$ is accessible. Suppose the weight function $\kappa(u,\beta, \phi, z)$ has compact support in $\mathcal{M} \times  M$.
    If we have $\kappa(u_0,\beta_0,\phi_0,z_0) \neq 0$ for some $(u_0,\beta_0,\phi_0) \in \mathcal{C}(z_0,\zeta^0)$,
    then ${I_\kappa}^* I_\kappa$ is a $\Psi$DO of order $-2$ elliptic at $(z_0,\zeta^	0)$.
\end{pp}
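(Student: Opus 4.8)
The plan is to obtain the statement by applying the clean composition theorem, Proposition \ref{CIC}, to $A_1 = I_\kappa^*$ and $A_2 = I_\kappa$, and then to read off ellipticity from the principal-symbol formula (\ref{principalsymbol}). First I would verify the three hypotheses. For (A1): since $T^*M$ is accessible we have $D = T^*M$, so by Proposition \ref{propertyofC}(c) the projection $\pi_\mathcal{M}\colon C_I \to T^*\mathcal{M}$ is an injective immersion, and Lemma \ref{InjectiveImmersion} then shows that $I_\kappa^* I_\kappa$ is a clean composition whose canonical relation is the diagonal $\Delta(T^*M)$; hence the composition is microlocally a $\Psi$DO. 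The excess $e$ of this clean intersection is the dimension of the fiber $C_\gamma$ over a point of $\Delta$, which is precisely the set $C_I(z_0,\zeta^0)$ of elements of $C_I$ lying over $(z_0,\zeta^0)$, naturally identified with $\mathcal{C}(z_0,\zeta^0)$; by the introduction this set is two-dimensional, so $e = 2$. Since $I_\kappa$ and $I_\kappa^*$ both have order $-3/2$ by Proposition \ref{prop_CI}, the composition order is $-3/2 - 3/2 + e/2 = -2$.

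For (A2): as $\kappa$ has compact support in $\mathcal{M}\times M$, Lemma \ref{microsupportcutoff} furnishes a compact set $K \subset C_I$ such that $\WF(I_\kappa)$ is conically compact and contained in an open subset of $K$, and the same holds for $I_\kappa^*$; this is exactly condition (A2). For (A3), the remark after Lemma \ref{InjectiveImmersion} shows that connectedness is not needed here, because the composed relation is the diagonal, which cannot intersect itself. Proposition \ref{CIC} therefore gives $I_\kappa^* I_\kappa \in I^{-2}(M\times M,\Delta')$, i.e.\ a $\Psi$DO of order $-2$.

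It remains to prove ellipticity at $(z_0,\zeta^0)$. By (\ref{principalsymbol}) the principal symbol is $\alpha = \int_{C_\gamma} \alpha_1 \times \alpha_2$, where $C_\gamma$ is the fiber over $(z_0,\zeta^0)$, which by Proposition \ref{propertyofC}(a) is parametrized by $(u,\phi) \in U_{(z_0,\zeta^0)}\times(\epsilon,\pi/2-\epsilon)$ and so coincides with $\mathcal{C}(z_0,\zeta^0)$. I would compute the principal symbol of $I_\kappa$ from the oscillatory-integral representation of Proposition \ref{prop_CI}: on $C_I$ it equals $\kappa$ times a nonvanishing geometric half-density carrying its Maslov factor. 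Since $\alpha_1$ is the symbol of the adjoint $I_\kappa^*$, it is the complex conjugate of the corresponding factor, so the Maslov phases and half-densities combine and the integrand becomes $|\kappa(u,\beta,\phi,z_0)|^2$ times a strictly positive density on $\mathcal{C}(z_0,\zeta^0)$. In particular it is nonnegative and never changes sign. Because $\kappa(u_0,\beta_0,\phi_0,z_0)\neq 0$ and $\kappa$ is continuous, this integrand is strictly positive on a neighborhood of $(u_0,\beta_0,\phi_0)$ in $\mathcal{C}(z_0,\zeta^0)$; with no possibility of cancellation, $\alpha(z_0,\zeta^0) = \int_{\mathcal{C}(z_0,\zeta^0)} |\kappa|^2\,(\text{positive density}) > 0$, so $I_\kappa^* I_\kappa$ is elliptic at $(z_0,\zeta^0)$.

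The main obstacle is the ellipticity step: one must carry out the half-density and Maslov bookkeeping in the composition of $I_\kappa^*$ with $I_\kappa$ carefully enough to confirm that $\alpha_1\times\alpha_2$ is a genuinely nonnegative density proportional to $|\kappa|^2$. This is exactly what rules out cancellation among the contributions of the different cones in the two-dimensional fiber $\mathcal{C}(z_0,\zeta^0)$, and hence what lets a \emph{single} nonvanishing value of $\kappa$ on that fiber force the normal-operator symbol to be nonzero. Everything else is a matter of invoking the already-established cleanness (Lemma \ref{InjectiveImmersion}), microlocal compactness (Lemma \ref{microsupportcutoff}), and the order computation.
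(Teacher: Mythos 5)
Your proposal is correct and follows essentially the same route as the paper: verifying (A1) via Proposition \ref{propertyofC}(c) and Lemma \ref{InjectiveImmersion}, (A2) via Lemma \ref{microsupportcutoff}, (A3) via the remark after Lemma \ref{InjectiveImmersion}, computing the order as $-3/2-3/2+e/2=-2$ with $e=\dim C_\gamma=2$, and deducing ellipticity from the symbol formula (\ref{principalsymbol}) because $\alpha_1\times\alpha_2$ reduces to $|\kappa|^2$ times a positive density on the fiber $C_\gamma\cong C_I(z_0,\zeta^0)$, so the nonnegative integrand cannot cancel and a single nonvanishing value of $\kappa$ forces the integral to be positive. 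The paper leaves the ellipticity step as an appeal to (\ref{principalsymbol}); your half-density/Maslov bookkeeping is exactly the intended justification.
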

\begin{remark*}
    The order is calculated by $(-3/2) + (-3/2) + e/2$, where the excess $e$ equals to the dimension of $C_\gamma$. 
    By Proposition \ref{propertyofC}, we have $C_\gamma \cong C_I(z,\zeta)$, of which the dimension is $2$.
\end{remark*} 

If it is not true that the whole cotangent bundle $T^*M$ is accessible, then we can have a microlocal version 
Proposition \ref{I8I} by choosing proper cutoff $\Psi$DOs.
More specifically, for a fixed accessible covector $(z_0,\zeta^0)$, 
there exists a conic neighborhood $\Gamma_0$ of $(z_0, \zeta^0)$ such that each covector in this neighborhood is accessible.
By choosing a cutoff $\Psi$DO which is supported in this neighborhood, then we can prove the following proposition.

\begin{pp}\label{mI8I}
    Suppose $(z_0,\zeta^0)$ is accessible. 
    Suppose $\kappa(u_0, \beta_0, \phi_0,z_0) \neq 0$ for some covector $(u_0,\beta_0,\phi_0,\hat{u}^0,\hat{\beta}^0,\hat{\phi}^0)$ 
    in $C_I(z_0,\zeta^0)$.
    Then there exists a $\Psi$DO $P(z,D_z)$ of order zero elliptic at $(z_0, \zeta^0)$ with microsupport in a conically compact neighborhood that is accessible,
    such that for any $Q = Q(u,\beta,\phi,D_u,D_\beta,D_\phi)$ that is a $\Psi$DO of order zero elliptic at $(u_0,\beta_0,\phi_0,\hat{u}^0,\hat{\beta}^0,\hat{\phi}^0)$ with conically compact support,
    the microlocalized normal operator $(Q{I_\kappa}P)^* Q I_\kappa P$ is a $\Psi$DO of order $-2$ elliptic at $(z_0,\zeta^0)$.
\end{pp}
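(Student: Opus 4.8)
\emph{Proof proposal.} The plan is to microlocalize so as to reduce to the globally accessible situation already handled in Proposition \ref{I8I}, and then to read off ellipticity from the clean-composition symbol formula (\ref{principalsymbol}). Since the accessible set $D$ is open, I would first fix a conically compact conic neighborhood $\Gamma_0$ of $(z_0,\zeta^0)$ with $\Gamma_0 \subset D$, and choose $P=P(z,D_z)$ of order zero, elliptic at $(z_0,\zeta^0)$, with $\WF(P)\subset\Gamma_0$; this is the operator whose existence the statement requires, and its construction does not involve $Q$. Now let $Q$ be any order-zero $\Psi$DO with conically compact microsupport, elliptic at $(u_0,\beta_0,\phi_0,\hat{u}^0,\hat{\beta}^0,\hat{\phi}^0)$. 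Composing the FIO $I_\kappa$ of Proposition \ref{prop_CI} with the $\Psi$DO $P$ on the right and the $\Psi$DO $Q$ on the left alters neither the order nor the canonical relation, so $A:=QI_\kappa P$ is an FIO of order $-3/2$ associated with $C_I$, with microsupport cut down to
\[
\WF'(A)\subset C_I\cap\pi_\mathcal{M}^{-1}(\WF(Q))\cap\pi_M^{-1}(\Gamma_0).
\]

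Next I would check that $\WF'(A)$ is conically compact and lies in the accessible region, so that assumption (A2) of Proposition \ref{CIC} holds for $A$. Containment in $\pi_M^{-1}(D)$ is immediate from $\Gamma_0\subset D$. For conic compactness I use the global parameterization $(u,\beta,\hat{\phi},z)$ of $C_I$ from the proof of Lemma \ref{microsupportcutoff}: the conically compact microsupport of $Q$ confines $(u,\beta)$ to a compact set (and $\beta\in S^2$ is compact in any case), while $\WF(P)\subset\Gamma_0$ confines $z$ to a compact set, leaving only the ray direction $\hat{\phi}$ free. By the homogeneity argument in Lemma \ref{microsupportcutoff}, the image of this set under $F_0$ is conically compact and sits in an open subset of a compact $K\subset C_I$, which is precisely (A2).

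On the open accessible region $\pi_M^{-1}(D)$ the projection $\pi_\mathcal{M}$ is an injective immersion by Proposition \ref{propertyofC}(c). Because $\WF'(A)$ lies in this region, Lemma \ref{InjectiveImmersion} applies to $A$ and shows that $A^*A=(QI_\kappa P)^*QI_\kappa P$ is a clean composition whose canonical relation is the diagonal of $T^*M\times T^*M$; this is assumption (A1), and (A3) is automatic here by the remark following Lemma \ref{InjectiveImmersion} since the diagonal does not intersect itself. Proposition \ref{CIC} then gives that $A^*A$ is a $\Psi$DO of order $(-3/2)+(-3/2)+e/2$, and since $C_\gamma\cong C_I(z_0,\zeta^0)$ has dimension $2$ the excess is $e=2$ and the order is $-2$.

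It remains to establish ellipticity at $(z_0,\zeta^0)$ from (\ref{principalsymbol}), and this is where the nonvanishing hypothesis on $\kappa$ enters. Because the operator has the self-adjoint form $A^*A$, the density $\alpha_1\times\alpha_2$ integrated over $C_\gamma$ is nonnegative, its value at each point being, up to a positive half-density factor, $|q|^2|\kappa|^2|p|^2$ with $q,p$ the symbols of $Q,P$. At the point of $C_\gamma\cong C_I(z_0,\zeta^0)$ corresponding to $(u_0,\beta_0,\phi_0,\hat{u}^0,\hat{\beta}^0,\hat{\phi}^0)$ we have $\kappa(u_0,\beta_0,\phi_0,z_0)\neq 0$ and both $q,p$ elliptic, so the integrand is strictly positive on a neighborhood and the integral cannot vanish. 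I expect the two genuinely technical points to be this verification of (A2)---making the separate localizations in $T^*M$ and in $T^*\mathcal{M}$ combine into a single conically compact microsupport inside the accessible set, which is what forces one to track the explicit parameterization of $C_I$ rather than argue abstractly---and the identification of the symbol integrand as a nonnegative quantity, since it is exactly this nonnegativity of the $A^*A$ structure that rules out the cancellation which the clean-composition formula would otherwise permit.
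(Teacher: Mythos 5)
Your proposal is correct and follows essentially the same route the paper sketches in the paragraph preceding Proposition \ref{mI8I}: localize with $P$ supported in an accessible conic neighborhood, verify (A1)--(A3) via Proposition \ref{propertyofC}, Lemma \ref{InjectiveImmersion}, and Lemma \ref{microsupportcutoff}, then apply Proposition \ref{CIC} and read off ellipticity from the nonnegative integrand in (\ref{principalsymbol}). The only nitpick is that the conically compact microsupport of $Q$ should be noted to confine $\phi$ (not just $(u,\beta)$) to a compact subinterval of $(\epsilon,\pi/2-\epsilon)$, which is exactly the role of the $\phi$-cutoff in Lemma \ref{microsupportcutoff}.
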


Theorem 1 is the direct result of this proposition and the following example is a special case when we have unrecoverable singularities.
\begin{example}
    Let the surface of vertices $\mathcal{S} = \{(x^1, x^2, 0)|\  x^1 x^1 + x^2 x^2 < R^2\}$ be an open disk with radius $R$ in the $xy$ plane. Let $M$ be an open domain above $xy$ plane.
    
    Consider a covector $(z, \zeta) \in T^*M$ with $\zeta_1$ or $\zeta_2$ nonzero. Otherwise, it is not accessible.
    There is a cone $c(u,\beta,\phi)$ with vertex $u = (u^1, u^2, 0)$ in $\mathcal{S}$ conormal to $(z,\zeta)$ only if 
    \begin{multicols}{3}
        \noindent
        \begin{align*}
        &(z-u)\cdot \zeta = 0\\
        & u^1 u^1 + u^2 u^2 < R^2
        \end{align*}
        \begin{align*}
        \Rightarrow
        \end{align*}
        \begin{align*}
        &u^1 \zeta_1 + u^2 \zeta_2 = z \cdot \zeta \\
        & u^1 u^1 + u^2 u^2 < R^2
        \end{align*}
    \end{multicols}
    If we have $\zeta_2 \neq 0$, we can solve $u^2 = \frac{z \cdot \zeta - v^1 \zeta_1}{\zeta_2}$ from the first equation, and plug it in the second one to have
    \[
    u^1 u^1 +  \frac{(z \cdot \zeta - v^1 \zeta_1)^2}{(\zeta_2)^2} < R^2.
    \]
    For simplification, we denote $u^1$ by $t$ to get
    \[
    ((\zeta_1)^2+(\zeta_2)^2) t^2 - 2\zeta_1(z\cdot \zeta) t + (z \cdot \zeta)^2 - (\zeta_2)^2R^2 <0.
    \]
    This is a parabola opening to the top. There exists a solution of $t$ if and only if 
    \[
    \Delta = b^2 - 4ac =4\big((\zeta_1)^2(z\cdot \zeta)^2 - ((\zeta_1)^2+(\zeta_2)^2)((z \cdot \zeta)^2 - (\zeta_2)^2 R^2) \big) > 0,
    \]
    which implies
    \begin{equation}
    \label{condition_zeta}
    ((\zeta_1)^2+(\zeta_2)^2)R^2 - (z\cdot \zeta)^2 >0.
    \end{equation}
    If $\zeta_2 = 0$, then we must have $\zeta_1 \neq 0$. By symmetry we should get the same result. 
    Notice, if we can find such $u$ in $\mathcal{S}$ that $(z-u) \cdot \zeta =0$, then we can construct a cone $c(u,\beta, \phi)$ conormal to $(z,\zeta)$ by properly choosing $\beta$ and $\phi$. 
    Thus, the set of the unrecoverable singularities is
    \[
    \{(z,\zeta)|\  ((\zeta_1)^2+(\zeta_2)^2)R^2 - (z\cdot \zeta)^2 <0 \}.
    \]
    \begin{figure}[h] 
        \centering
        \includegraphics[height=0.4\textwidth]{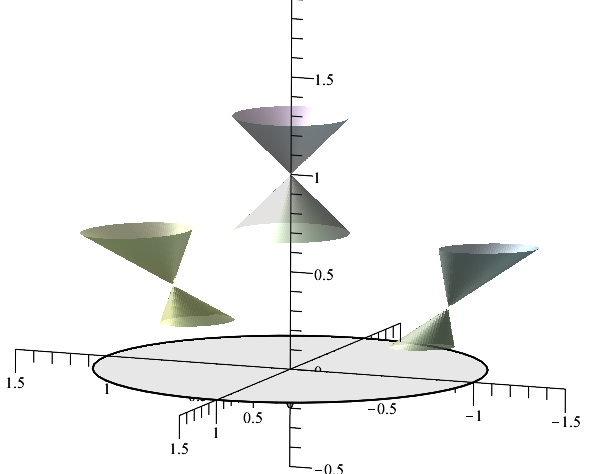}
        \caption{unrecoverable singularities at three points}
    \end{figure}
\end{example}
\section{proof of theorem 2}\label{sectionproof2}
For the $L^2$ estimates, we have the following restatement of \cite[Theorem 4.1.9 and Thmeorem 4.3.2]{Hoermander1791},
which indicates the mapping properties in the general case.
\begin{pp}\label{L2cts}
    Let $C$ be a homogeneous canonical relation from $T^*M$ to $T^*\mathcal{M}$ such that 
    \begin{itemize}
        \item[(1)] the maps $C \rightarrow M$ and $C \rightarrow \mathcal{M}$ have subjective differentials,
        \item[(2)] the projections $\pi_M: C \rightarrow T^*M$ and $\pi_\mathcal{M}: C \rightarrow T^*\mathcal{M}$ have constant rank.
    \end{itemize} 
    Let $k_M = \rank(\diff \pi_M) - \dim M$ and $k_{\mathcal{M}} = \rank(\diff \pi_\mathcal{M}) - \dim \mathcal{M}$ respectively. 
    Then $k_M = k_{\mathcal{M}} \equiv k$. 
    Suppose $A  \in I^m(\mathcal{M}\times M, C)$ is properly supported. Then 
    \[
    A: H^s_{loc}(M) \rightarrow H^{s+t}_{loc}(\mathcal{M}), \quad A^*: H^s_{loc}(\mathcal{M}) \rightarrow H^{s+t}_{loc}({M})
    \]
    are continuous,
    for any $t \leq (2k - \dim M - \dim \mathcal{M})/4 - m$ and $s \in \mathbb{R}$.
\end{pp}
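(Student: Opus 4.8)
The plan is to derive the proposition from H\"ormander's $L^2$ continuity theorems for Fourier integral operators whose two projections have constant rank, the substantive local work being the defect identity $k_M = k_{\mathcal{M}}$, the reduction to a single borderline $L^2$ estimate, and the passage between compactly supported and local Sobolev spaces afforded by the properly supported hypothesis.

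First I would establish $k_M = k_{\mathcal{M}}$ by symplectic linear algebra at a point $\gamma \in C$. The tangent space $L = T_\gamma C$ is Lagrangian in $W = T_\gamma(T^*\mathcal{M} \times T^*M)$ for the difference form $\omega = \sigma_{\mathcal{M}} \oplus (-\sigma_M)$, with $\dim L = \dim\mathcal{M} + \dim M$. Setting $W_1 = T(T^*\mathcal{M}) \oplus 0$ and $W_2 = 0 \oplus T(T^*M)$, these are symplectic subspaces with $W_1^{\omega} = W_2$, and one has $\ker(\diff\pi_M) = L \cap W_1$ and $\ker(\diff\pi_{\mathcal{M}}) = L \cap W_2$. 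Since $L^{\omega} = L$, taking symplectic orthocomplements gives $(L \cap W_1)^{\omega} = L + W_2$; comparing the dimensions of the two sides yields $\rank(\diff\pi_{\mathcal{M}}) - \rank(\diff\pi_M) = \dim\mathcal{M} - \dim M$, which is exactly $k_M = k_{\mathcal{M}}$. The same bookkeeping shows the fibre dimensions of $\pi_{\mathcal{M}}$ and $\pi_M$ are $\dim M - k$ and $\dim\mathcal{M} - k$, while the surjectivity hypothesis (1) forces $\rank(\diff\pi_M) \ge \dim M$ and $\rank(\diff\pi_{\mathcal{M}}) \ge \dim\mathcal{M}$, hence $k \ge 0$.

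Next I would reduce the weighted estimate to one borderline bound. Since $A$ is properly supported, so are the elliptic $\Psi$DOs $\Lambda^{s+t}_{\mathcal{M}}$ and $\Lambda^{-s}_M$ of the indicated orders on $\mathcal{M}$ and $M$, and these act as isomorphisms between the relevant local Sobolev spaces. Composition with a $\Psi$DO leaves $C$ unchanged, so $\Lambda^{s+t}_{\mathcal{M}} A \Lambda^{-s}_M \in I^{m+t}(\mathcal{M}\times M, C)$; hence the claim for all $s$ and all $t \le (2k - \dim M - \dim\mathcal{M})/4 - m$ follows, using the embeddings $H^{s'}_{loc} \hookrightarrow H^{s}_{loc}$ for $s' \ge s$, once one shows that an operator $B \in I^{\mu_c}(\mathcal{M}\times M, C)$ of critical order $\mu_c = (2k - \dim M - \dim\mathcal{M})/4$ is bounded $L^2_{comp}(M) \to L^2_{loc}(\mathcal{M})$; a partition of unity subordinate to the properly supported structure handles the $comp$-to-$loc$ passage. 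This $L^2$ bound is H\"ormander's theorem: the constant rank of the two projections permits bringing $C$ microlocally to a normal form by conjugating with elliptic FIOs associated with canonical graphs, after which the model kernel is estimated directly, each of the two dropped fibre dimensions $\dim M - k$ and $\dim\mathcal{M} - k$ contributing a loss of $1/4$ for the total $(\dim M + \dim\mathcal{M} - 2k)/4 = -\mu_c$ matching $B$. The statement for $A^*$ is then immediate: $A^* \in I^m(M\times\mathcal{M}, (C^{-1})')$ by \cite[Thm.\ 25.2.2]{Hoermander2009}, the relation $C^{-1}$ has the same invariant $k$ with the roles of $M$ and $\mathcal{M}$ exchanged, and the threshold is symmetric in $M$ and $\mathcal{M}$.

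The main obstacle is the $L^2$ boundedness of $B$ at the critical order for a genuinely degenerate $C$: when $\pi_{\mathcal{M}}$ or $\pi_M$ drops rank one cannot invoke the clean $L^2$ boundedness of zero-order operators associated with a canonical graph, and the precise derivative loss has to be read off from the degeneracy. This is exactly where hypothesis (2) enters, guaranteeing the microlocal normal form, and it is the step I would defer to the cited theorems of H\"ormander rather than reprove; the remaining labour is the routine check that the order arithmetic reproduces the exponent $t$ and that proper supportedness makes every composition and the $comp$-to-$loc$ reduction legitimate.
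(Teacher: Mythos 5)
Your proposal is correct and follows essentially the same route as the paper: conjugate $A$ by the elliptic operators $\Lambda^{s+t}$ and $\Lambda^{-s}$ to reduce to the borderline $L^2$ estimate of H\"ormander's constant-rank theorem (Thm.\ 4.3.2 of the 1971 FIO paper), and handle $A^*$ by noting that $C^{-1}$ satisfies the same hypotheses with the roles of $M$ and $\mathcal{M}$ exchanged. The only difference is that you spell out the symplectic linear algebra behind $k_M=k_{\mathcal{M}}$, which the paper simply cites (Thm.\ 4.1.9); your computation is correct.
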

\begin{proof}
    This corollary is a direct result of the theorems above. 
    Let $\Lambda^\sigma$ be the $\Psi$DO of order $\sigma$ such that 
    $\Lambda^\sigma u = \int e^{i x \cdot \xi} (1+|\xi|^2)^{\frac{\sigma}{2}}  \hat{u}(\xi) \diff \xi$.
    We apply \cite[Thm.4.3.2]{Hoermander1971} to $\tilde{A} = \Lambda^{s+t} A \Lambda^{-s} \in I^{m+t}(\mathcal{M}\times M, C') $. 
    Then $\tilde{A}: L^2_{loc}{M} \rightarrow L^2_{loc}(\mathcal{M})$ continuously, for any $m+t \leq (2k - \dim M - \dim \mathcal{M})/4$.
    Notice if $C$ satisfies the conditions (1)(2) in above corollary, then $C^{-1}$ satisfies them as well. 
    Since $A^* \in I^m(M \times \mathcal{M}, (C^{-1})')$, we can apply the same argument.
\end{proof}
Now we consider the conical Radon transform and its canonical relation $C_I$. We have the following claim by Proposition \ref{L2cts}.
\begin{cl*}
    When $T^*M$ is accessible, the canonical relation $C_I$ satisfies condition (1) and (2). In particular, we have $k = \dim M = 3$ and the inequality in Corollary \ref{L2cts} is $t\leq 1$.
\end{cl*}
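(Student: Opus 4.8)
The plan is to verify the two hypotheses of Proposition \ref{L2cts} for $C_I$, viewed as a homogeneous canonical relation from $T^*M$ to $T^*\mathcal{M}$, and then to read off $k$ and the range of $t$ purely from dimension counts. Recall $\dim M = 3$, $\dim \mathcal{M} = 2+2+1 = 5$, $\dim C_I = 8$, $\dim T^*M = 6$, $\dim T^*\mathcal{M} = 10$, and that $I_\kappa$ is an FIO of order $m = -3/2$ by Proposition \ref{prop_CI}.

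I would first dispose of condition (2), the constant rank of the two projections. For $\pi_\mathcal{M}$ this is immediate: since $T^*M$ is accessible, Proposition \ref{propertyofC}(c) shows $\pi_\mathcal{M}$ is an injective immersion on all of $C_I$, so $\rank(\diff\pi_\mathcal{M}) = \dim C_I = 8$ is constant and $k_\mathcal{M} = 8 - 5 = 3$. For $\pi_M$ I would invoke the global parametrization $(u,\beta,\hat{\phi},z)$ of $C_I$ from the proof of Lemma \ref{microsupportcutoff} together with Proposition \ref{propertyofC}(a): each fiber $\pi_M^{-1}(z,\zeta) = C_I(z,\zeta)$ is a smooth surface of dimension $2$, and in the parametrization by $(z,\zeta,v^1,\phi)$ the projection $\pi_M$ is the coordinate projection onto the $(z,\zeta)$ factor. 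Hence $\pi_M$ is a submersion onto the open accessible set $D \subset T^*M$, with $\rank(\diff\pi_M) = 6$ constant and $\dim\ker(\diff\pi_M) = 2$, so $k_M = 6 - 3 = 3$, consistent with $k_\mathcal{M}$.

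Next I would check condition (1). In the coordinates $(u,\beta,\hat{\phi},z)$ the map $C_I \to M$ is the projection onto $z$, which is visibly a submersion. The map $C_I \to \mathcal{M}$ is $(u,\beta,\hat{\phi},z) \mapsto (u,\beta,\phi)$ with $\cos\phi = \beta\cdot(z-u)/|z-u|$; its differential carries $\partial_u$ and $\partial_\beta$ onto the corresponding base directions up to a multiple of $\partial_\phi$, while a direct computation gives $d_z\cos\phi = |z-u|^{-1}\big(\beta - (\beta\cdot m)\,m\big)$ with $m = (z-u)/|z-u|$, which is nonzero because $\phi\in(\epsilon,\pi/2-\epsilon)$ keeps $\beta$ away from $\pm m$. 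Thus moving $z$ surjects onto $\partial_\phi$, and the differential of $C_I \to \mathcal{M}$ is surjective onto the $5$-dimensional $T\mathcal{M}$, establishing (1).

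With (1) and (2) verified, Proposition \ref{L2cts} applies and yields $k = k_M = k_\mathcal{M} = 3 = \dim M$. The admissible exponent is then $t \leq (2k - \dim M - \dim\mathcal{M})/4 - m = (6 - 3 - 5)/4 + 3/2 = 1$, which is the claimed $t \leq 1$. I expect the only genuine obstacle to be the constant-rank statement for $\pi_M$ in condition (2): one must know that the fibers $C_I(z,\zeta)$ assemble into a regular foliation so that $\pi_M$ is a true submersion, and not merely a map whose fibers happen to have constant dimension. This is exactly where accessibility enters, since it is what makes the uniform description of the fibers in Proposition \ref{propertyofC}(a) available across all of $T^*M$.
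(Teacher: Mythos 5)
Your proposal is correct and follows essentially the same route as the paper: verify (1) and (2) using the parametrization $(u,\beta,\hat{\phi},z)$ of $C_I$, the injective immersion property of $\pi_\mathcal{M}$ from Proposition \ref{propertyofC}, and the nonvanishing of $\partial\phi/\partial z$ (your $d_z\cos\phi = |z-u|^{-1}(\beta-(\beta\cdot m)m)$ is the same vector up to the factor $-1/\sin\phi$), then read off $k=3$ and $t\le 1$. The only cosmetic difference is that the paper obtains the submersion property of $\pi_M$ directly from the symplectic duality with the immersion $\pi_\mathcal{M}$ (an injective immersion of one projection of a canonical relation forces the other to be a submersion), whereas you rederive it from the fiber parametrization $(z,\zeta,v^1,\phi)$; both are valid.
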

\begin{proof}
    By Proposition \ref{propertyofC}, the projection $\pi_\mathcal{M}$ is an injective immersion, which implies $\pi_M$ is a submersion. Thus, $k = \dim(T^*M) - \dim M = \dim M$. 
    From the proof of Proposition \ref{propertyofC}, we have $C_I$ is parameterized by $(u,\beta,z,\hat{\phi})$. 
    It is obvious that $C \rightarrow M$ has subjective differential. 
    For the projection $C \rightarrow \mathcal{M}$, the Jacobian is in the following,
    \begin{figure}[h] 
        \centering
        \includegraphics[height=0.2\textwidth]{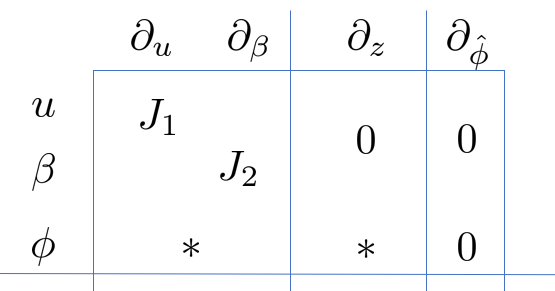}
    \end{figure}
    where $* = \frac{\partial \phi}{\partial z}$.
    Since $\phi$ is solved from $(z-u) \cdot \beta - |z-u| \cos \phi =0$, differentiating w.r.t. $z$ we have
    \[
    \frac{\partial \phi}{\partial z} = - \frac{1}{|z-u|\sin \phi}(\beta - \frac{z-u}{|z-u|}\cos \phi).
    \]
    Thus, the vector $\frac{\partial \phi}{\partial z}$ is nonzero and the differential of projection has rank equals $\dim \mathcal{M}$.
\end{proof}
By the claim above, we can prove Theorem 2 in the following based on similar arguments in \cite{MR2970707,Assylbekov2018}
\begin{proof}[Proof of Theorem 2]
    By Corollary \ref{L2cts} and the claim above, we have the second inequality and 
    $
    \|I^*_\kappa g\|_{H^{s+1}(M)} \leq C \|g\|_{H^s(\mathcal{M})},
    $
    for some constant $C$.
    From Proposition \ref{I8I}, we have 
    $
    C_1 \| f\|_{H^s(M)} - C_{s,l} \| f\|_{H^l(M)} \leq \|I^*_\kappa I_\kappa f\|_{H^{s+2}(M)}.
    $
    Combining these two we have desired result.
\end{proof}
\begin{proof}[Another proof of the estimate for $s = -1$]
    First we prove $I_\kappa: H_{\bar{M}}^s \rightarrow H^{s+1}(\mathcal{M})$ is bounded, for $ s \geq 0$, 
    where $H_{\bar{M}}^s$ is the space of all $u \in H^s(\mathbb{R}^3)$ supported in $\bar{M}$.
    Indeed, we have
    \[
    \| I_\kappa f \|^2_{H^{s+1}(\mathcal{M})} 
    \leq C\sum_{|\alpha| \leq 2s+2} |(\partial_{(u,\beta,\phi)}^\alpha I_\kappa f, I_\kappa f)_{L^2(M)}|
    = C\sum_{|\alpha| \leq 2s+2} |(I^*_\kappa \partial_{(u,\beta,\phi)}^\alpha I_\kappa f, f)_{L^2(M)}|.
    \]
    Here we have
    \[
    \partial_{(u,\beta,\phi)}^\alpha I_\kappa f = \sum_{\alpha_1+ \alpha_2 = \alpha}\int 
    \big(\partial_{(u,\beta,\phi)}^{\alpha_1} \kappa \big) \delta(g)
    +\kappa \big(\partial_{(u,\beta,\phi)}^{\alpha_2} g \big) \delta^{(|\alpha_2|)}(g) f(z) \diff z,
    \]
    where $\alpha, \alpha_1,\alpha_2$ are multi-indexes and $g = (z-u)\cdot \beta - |z-u|\cos\phi$.
    Since we always have $|\partial_z g| \neq 0$, locally we can write $\delta^{(1)}(g)$ as $\frac{1}{\partial_{z_j} g} \partial_{z_j} \delta(g)$, 
    where $j$ is the index such that $\partial_{z_j} g \neq 0$. Therefore by integration by parts and induction, 
    we get the similar integral transform of derivatives of $f$ up to order $|\alpha|$ with a new weight function. 
    This implies  $I^*_\kappa \partial_{(u,\beta,\phi)}^\alpha I_\kappa f$ is a $\Psi$DO of order $|\alpha| -2$. 
    Thus, we have the estimates
    \[
    \| I_\kappa f \|^2_{H^{s+1}(\mathcal{M})} \leq C \|f\|_{H^s(M)}.
    \]
    When $s=-1$, the proof is simplified. For non-integer $s \geq -1$, we can use interpolation methods.
    Then by duality, we have $I_\kappa^*: H^{-s-1}_{\bar{\mathcal{M}}} \rightarrow H^{-s}(M)$ is bounded, for $(H^{s+1}(\mathcal{M}))^* = H^{-s-1}_{\bar{\mathcal{M}}}$ and $(H^s(M))^* = H_{\bar{M}}^s$. 
    Since we always assume $\kappa(u,\beta,\phi,z)$ is compactly supported, we have $I_\kappa f$ has support in $\mathcal{M}$.
    Thus,
    \[
    \|I^*_\kappa I_\kappa f\|_{H^{-s}(M)} \leq C \|I_\kappa f\|_{H^{-s-1}(\mathcal{M})}.
    \]
    Combining these two inequality and the ellipticity of $I^*_\kappa I_\kappa$ , we have for $l<-1$,
    \[
    C_1\|f\|_{H^{-1}} - C_{l} \| f\|_{H^l(M)} \leq \|I_\kappa f\|_{L^2(\mathcal{M})} \leq C_2\|f\|_{H^{-1}}.
    \]
    We abuse the notation $C, C_1, C_2$ to denote different constants.
\end{proof}
\section{Restricted Cone Transform}\label{sectionrestricted}
In this section, suppose $\mathcal{S}_0 $ is a smooth regular curve that is parameterized by $u(t)$. For fixed $\phi_0 \in (\epsilon, 1-\epsilon)$, we define the restricted cone transform as
\[
I_{\kappa,\phi_0} f(u,\beta) = \int \kappa(u,\beta,z) \delta((z-u)\cdot \beta - |z-u|\cos \phi_0) f(z) \diff z.
\]
We have the following corollaries.
\begin{corollary}
    The restricted cone transform $I_{\kappa,\phi_0}$ is an FIO of order $-1$ associated with the canonical relation
    \begin{align*}
    C_{\phi_0}
    & = \{ (u, \beta, 
    \underbrace{ u'(t) \cdot  \zeta \vphantom{\lambda |z-u|\sin\phi}}_{\hat{u}},
    \underbrace{\lambda \transp{J_2}  (z-u) \vphantom{\lambda |z-u|\sin\phi}}_{\hat{\beta}},
    z,
    \zeta 
    ), \ 
    \varphi_{\phi_0}=0
    \},
    \end{align*}
    where 
    $\varphi_{\phi_0}(u,\beta,z) = (z-u) \cdot \beta - |z-u| \cos \phi_0$
    and $\zeta = -\lambda(\beta - \frac{z-u}{|z-u|}\cos\phi_0)$; 
    the vertex $u = u(t)$; 
    the unit vector $\beta$ is parameterized in the spherical coordinates 
    and $J_2 $ is the Jacobian matrix defined as before.
\end{corollary}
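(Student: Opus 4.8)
The plan is to repeat the oscillatory-integral analysis of Proposition \ref{prop_CI} almost verbatim, dropping the $\phi$ variable (since the opening angle is now frozen at $\phi_0$) and replacing the two-dimensional vertex Jacobian $J_1$ by the single tangent vector $u'(t)$ of the curve $\mathcal{S}_0$. First I would write
\[
I_{\kappa,\phi_0} f(u,\beta) = (2\pi)^{-1} \int_{\mathbb{R}^3}\int_{\mathbb{R}} e^{i\lambda \varphi_{\phi_0}(u,\beta,z)} \kappa(u,\beta,z) f(z)\,\diff\lambda\,\diff z,
\]
with the single phase variable $\lambda$ and amplitude $\kappa$ of order $0$. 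Since $\diff_{(u,\beta,z)}\varphi_{\phi_0}\neq 0$, this is a nondegenerate phase function and the Schwartz kernel is conormal to the characteristic manifold $Z_{\phi_0}=\{\varphi_{\phi_0}=0\}$, so its wavefront set lies in the conic Lagrangian $N^*Z_{\phi_0}\setminus 0$.

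Next I would record the differentials
\[
\diff_z\varphi_{\phi_0} = \beta - \tfrac{z-u}{|z-u|}\cos\phi_0, \quad \partial_t \varphi_{\phi_0} = -\,u'(t)\cdot \diff_z\varphi_{\phi_0}, \quad \diff_\beta\varphi_{\phi_0} = \transp{J_2}(z-u),
\]
and set $\zeta = -\lambda \diff_z\varphi_{\phi_0}$, $\hat u = \lambda\,\partial_t\varphi_{\phi_0} = u'(t)\cdot\zeta$, and $\hat\beta = \lambda\transp{J_2}(z-u)$. Writing the twisted conormal bundle $\Lambda_{\phi_0}$ in these coordinates reproduces exactly the stated $C_{\phi_0}$, and identifies $\zeta$ with the formula given in the statement.

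The one step that is not a pure transcription is checking that $\Lambda_{\phi_0}$ stays off both zero sections, because freezing the angle removes the term $\diff_\phi\varphi = |z-u|\sin\phi$ that in Proposition \ref{prop_CI} cleanly detected the degenerate locus $\sin\phi=0$. I would instead argue by a direct computation on $\{\varphi_{\phi_0}=0\}$: there $\beta\cdot\tfrac{z-u}{|z-u|} = \cos\phi_0$, hence $|\diff_z\varphi_{\phi_0}| = \sin\phi_0 \neq 0$ since $\phi_0\in(\epsilon,\pi/2-\epsilon)$, so $\zeta\neq 0$ whenever $\lambda\neq 0$ and we are off the zero section of $T^*M$. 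Moreover, since $\tfrac{z-u}{|z-u|}$ makes the angle $\phi_0\in(0,\pi/2)$ with $\beta$ it has a nonzero component orthogonal to $\beta$, so $\transp{J_2}(z-u)\neq 0$ and hence $\hat\beta\neq 0$, keeping us off the zero section of $T^*(\mathcal{S}_0\times S^2)$. This places $\Lambda_{\phi_0}\subset (T^*(\mathcal{S}_0\times S^2)\setminus 0)\times (T^*M\setminus 0)$, so $C_{\phi_0}$ is a genuine homogeneous canonical relation.

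Finally the order follows from H\"ormander's convention in \cite[Definition 3.2.2]{Hoermander2003}: with amplitude order $0$, a single phase variable ($N=1$), $\dim(\mathcal{S}_0\times S^2)=3$ and $\dim M = 3$, the order is $0 + \tfrac12\cdot 1 - \tfrac14(3+3) = -1$. I expect the only genuine obstacle to be the off-zero-section verification just described, which must now be done by the direct angle computation rather than inherited from the $\diff_\phi\varphi$ slot; the remainder is a direct adaptation of Proposition \ref{prop_CI}.
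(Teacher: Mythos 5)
Your proposal is correct and is exactly the intended argument: the paper gives no separate proof of this corollary, treating it as the verbatim adaptation of Proposition \ref{prop_CI} with $J_1$ replaced by $u'(t)$ and the $\phi$ slot dropped, which is what you carry out, including the correct order count $0+\tfrac12-\tfrac14(3+3)=-1$. Your extra care at the off-zero-section step (computing $|\diff_z\varphi_{\phi_0}|=\sin\phi_0\neq 0$ on $\{\varphi_{\phi_0}=0\}$ and noting $\transp{J_2}(z-u)\neq 0$ since $z-u$ is not parallel to $\beta$) is the right replacement for the $\diff_\phi\varphi$ argument that is no longer available.
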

\begin{corollary}
    Let $D_0$ be the set of all $(z,\zeta)$ in $T^*M $ that are accessible w.r.t. $\mathcal{S}_0$. 
    Then $D_0$ is an open set.
    Let $R_0 =\{ (u,\beta,\phi,\hat{u},\hat{\beta},\hat{\phi}) \in C_I(z,\zeta)|(z,\zeta) \text{ is accessible}\}$.
    We have the following properties.
    \begin{itemize}
        \item[(a)] For every 
        $(u,\beta,\hat{u},\hat{\beta}) 
        \in R_0 $, 
        there is one unique solution $(z,\zeta)$ for the equation $C_{\phi_0}(z,\zeta)=(u,\beta,\hat{u},\hat{\beta})$, which is given by (\ref{solution_z0}) and (\ref{solution_z1}) .
        \item[(b)] The projection $\pi_\mathcal{M}$ restricted to $\pi^{-1}_\mathcal{M}(D_0)$ is an injective immersion. 
        In particular, if $\mathcal{S_0}$ satisfies Tuy's condition, then $\pi_\mathcal{M}$ itself is an injective immersion. 
    \end{itemize}
\end{corollary}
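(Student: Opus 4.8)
The plan is to transcribe the proof of Proposition~\ref{propertyofC}, the only structural changes being that the vertex now runs over a curve (so $u=u(t)$ and $\hat u=u'(t)\cdot\zeta$ is a single scalar) and that the opening angle is frozen at $\phi_0$ (so the coordinate $\hat\phi$ is absent). That $D_0$ is open needs no work: the non-tangential intersection of the conormal hyperplane $H_{(z,\zeta)}$ with $\mathcal{S}_0$ is an open condition on $(z,\zeta)$, exactly as was observed for $D$ just after the definition of accessibility.

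For part (a) I would invert the relations cutting out $C_{\phi_0}$ by hand, reusing the three steps of Proposition~\ref{propertyofC}(b). Writing $m=(z-u)/|z-u|$ and $\rho=\lambda|z-u|$, the recovered pair will be
\begin{equation}\label{solution_z0}
z = u + |z-u|\,m,\qquad \zeta=-\lambda\bigl(\beta-\cos\phi_0\,m\bigr),
\end{equation}
where $m$ and $\rho$ are fixed by
\begin{equation}\label{solution_z1}
m=\cos\phi_0\,\beta+\frac{\hat\beta_1}{\rho}\beta_1+\frac{\hat\beta_2}{\rho\sin\theta}\beta_2,\qquad
\rho^2=\frac{1}{\sin^2\phi_0}\Bigl(\hat\beta_1^2+\frac{\hat\beta_2^2}{\sin^2\theta}\Bigr).
\end{equation}
These come out as follows: $\varphi_{\phi_0}=0$ gives $m\cdot\beta=\cos\phi_0$, while the two entries of $\hat\beta=\lambda\,\transp{J_2}(z-u)$ give $m\cdot\beta_1=\hat\beta_1/\rho$ and $m\cdot\beta_2=\hat\beta_2/(\rho\sin\theta)$; since $\{\beta,\beta_1,\beta_2\}$ is orthonormal, $|m|=1$ converts these into (\ref{solution_z1}). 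The one genuinely new step is the separation of $\lambda$ from $|z-u|$: in the unrestricted transform this was read off from $\hat\phi=\lambda|z-u|\sin\phi$, whereas here the only datum left is the scalar $\hat u=-\lambda\,u'(t)\cdot(\beta-\cos\phi_0\,m)$, which together with (\ref{solution_z1}) determines $\lambda$, and then $|z-u|=\rho/\lambda$; the two-fold sign choice for $\rho$ is pinned down by $|z-u|>0$. The nonvanishing used is $\hat\beta\neq0$ (automatic, since $z-u\parallel\beta$ would force $m\cdot\beta=\pm1$, impossible as $\phi_0$ is bounded away from $0$) and $u'(t)\cdot\zeta=\hat u\neq0$, which is exactly accessibility of $(z,\zeta)$ with respect to $\mathcal{S}_0$; this is why the statement restricts to $R_0$. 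As in the full case, the poles $\theta\in\{0,\pi\}$ are dealt with by passing to another regular chart for $\beta$.

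For part (b), injectivity of $\pi_\mathcal{M}$ on the accessible part is just the uniqueness in (a). For the immersion property I would argue as in Proposition~\ref{propertyofC}(c): the previous corollary exhibits $C_{\phi_0}$ as a smooth $6$-manifold (parameters $u,\beta,z,\lambda$ modulo the single relation $\varphi_{\phi_0}=0$), and I would check $\rank(d\pi_\mathcal{M})=6$ from the Jacobian of $(u,\beta,\hat u,\hat\beta)$ in those parameters, the only full-rank condition being the analogue of the condition $\rank J_3=3$, namely $u'(t)\cdot(\beta-\cos\phi_0\,m)\neq0$, i.e. $\hat u\neq0$, again supplied by accessibility. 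Alternatively, and more cleanly, since (a) writes $(z,\zeta)$ as a smooth function of $(u,\beta,\hat u,\hat\beta)$ along $C_{\phi_0}$, any tangent vector annihilated by $d\pi_\mathcal{M}$ would force the variation of $(z,\zeta)$ to vanish as well and hence be zero, so $\pi_\mathcal{M}$ is automatically an immersion. When $\mathcal{S}_0$ satisfies Tuy's condition we have $D_0=T^*M$, so the restriction is vacuous and $\pi_\mathcal{M}$ itself is an injective immersion, which is the second assertion.

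The main obstacle is the single step of part (a) that has no analogue in Proposition~\ref{propertyofC}: recovering the scale $\lambda$ and the radius $|z-u|$ separately. Removing the angular coordinate deletes the convenient datum $\hat\phi$, so the scale must instead be read off from $|m|=1$ together with $\hat\beta$ — which only returns the product $\rho=\lambda|z-u|$ — and the two factors are then disentangled using the lone scalar $\hat u$. Managing the attendant sign of $\rho$ and verifying $|z-u|>0$ to choose the correct branch is the only delicate bookkeeping; everything else is a transcription of the unrestricted argument with the Jacobian $J_1$ replaced by the tangent vector $u'(t)$.
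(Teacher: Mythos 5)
Your proposal follows essentially the same route as the paper: part (a) is obtained by projecting $m=(z-u)/|z-u|$ onto the orthonormal frame $\{\beta,\beta_1,\beta_2\}$ to recover $m$ and the product $\rho=\lambda|z-u|$ from $\varphi_{\phi_0}=0$ and $\hat\beta$, then separating $\lambda$ from $|z-u|$ via the single scalar $\hat u=u'(t)\cdot\zeta\neq0$ supplied by accessibility, and part (b) deduces injectivity from (a) and the immersion property from the fact that $(u,\beta,\hat u,\hat\beta)$ parametrizes the $6$-dimensional $C_{\phi_0}$. The only difference is that you make explicit the sign ambiguity in $\rho$, which the paper's proof passes over by silently taking the positive root.
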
 
\begin{proof}
    For (a), notice we have $\lambda|z-u| = \frac{\hat{\phi}}{\sin \phi} = \frac{1}{\sin \phi}|(\hat{\beta}_1, \frac{1}{\sin \theta} \hat{\beta}_2)|$. By (\ref{solution_z}) and its proof, when $\theta \neq 0$ or $\pi$,
    \begin{align}\label{solution_z0}
    &z = u + \frac{|(\hat{\beta}_1, \frac{1}{\sin \theta} \hat{\beta}_2)|}{\lambda \sin\phi} m, \quad \zeta = - \lambda(\beta - \cos\phi m )
    \end{align}
    where
    \[
    m=\cos \phi \beta + \frac{\sin \phi}{|(\hat{\beta}_1, \frac{1}{\sin \theta} \hat{\beta}_2)|}(\hat{\beta}_1 \beta_1 + \frac{1}{\sin \theta}\hat{\beta}_2 \beta_2).
    \]
    We still need to solve $\lambda$ from $(u,\beta,\hat{u},\hat{\beta})$. Note that $\hat{u} = u'(t) \cdot \zeta = -\lambda u'(t) \cdot (\beta - \frac{z-u}{|z-u|}\cos \phi_0) \neq 0$, since $(z,\zeta)$ is accessible. Therefore, we have
    \begin{align}\label{solution_z1}
    \lambda = -\frac{\hat{u}}{u'(t) \cdot (\beta - \frac{z-u}{|z-u|}\cos \phi_0)}.
    \end{align}
    For (b), to prove that $\pi_{\mathcal{M}}$ restricted to $\pi^{-1}_\mathcal{M}(D_0)$ is an immersion, 
    it suffices to show that $C_{\phi_0}$ is parameterized by $(u,\beta,\hat{u},\hat{\beta})$.
    Indeed, from (b), $(z,\zeta)$ can be represented by $(u,\beta,\hat{u},\hat{\beta})$. 
    By writing $z = u + \rho \cos \phi_0  \beta + \rho \sin \phi_0 (\sin \alpha \beta_1 + \cos \alpha \beta_2)$ and performing a same argument as in the proof of Proposition \ref{InjectiveImmersion}, we can show it is a parameterization 
    and therefore the rank of the differential of $\pi_{\mathcal{M}}$ equals $\dim C_{\phi_0} = 6$.
\end{proof}
\begin{remark*}
Based on similar arguments, we can also show the same results hold if we only restrict the vertexes on a smooth curve $\mathcal{S}_0$ without $\phi$ fixed. Additionally, one can show an analog of recovery of singularities in both cases as in Theorem 1. 
\end{remark*}
\begin{footnotesize}
    \bibliographystyle{plain}
    \bibliography{microlocal_analysis}
\end{footnotesize}
\end{document}